\newtheorem{theorem}{Theorem}
\newtheorem{corollary}{Corollary}
\newtheorem{conclusion}{Conclusion}
\newtheorem{definition}{Definition}
\journal{arXiv}
\begin{document}

\begin{frontmatter}



\title{Closed-form formula of Riemann zeta function and eta function for all non-zero given complex numbers via sums of powers of complex functions to disprove Riemann hypothesis}


\author{Dagnachew Jenber Negash}

\address{Addis Ababa Science and Technology University\\Addis Ababa, Ethiopia\\Email: $\text{djdm}\_101979@\text{yahoo.com}$}
\begin{abstract}
An explicit identity of sums of powers of complex functions presented via this a closed-form formula of Riemann zeta function produced at any given non-zero complex numbers. The closed-form formula showed us Riemann zeta function has no unique solution for any given non-zero complex numbers which means Riemann zeta function is entirely divergent. Infinitely many zeros of Riemann zeta function produced unfortunately those zeros also gives us non-zero values of Riemann zeta function. Among those zeros some of them are zeros of Riemann hypothesis. The present paper also discussed on eta function(alternating Riemann zeta function) with exactly the same behavior as Riemann zeta function.
\end{abstract}

\begin{keyword}
Sums of Powers of Complex Functions\sep Riemann zeta function\sep Eta function\sep Riemann Hypothesis


\end{keyword}

\end{frontmatter}


\section{Introduction}
\label{S:1}
\justify
In his 1859 paper on the number of primes less than a given magnitude, bernhard Riemann (1826-1866) examined the properties of the function
\begin{equation*}
\zeta(s)=\sum_{n=1}^{\infty}n^{-s}
\end{equation*}
for {\bf s} a complex number. This function is analytic for real part of {\bf s} greater than 1 and is related to the prime numbers by the euler product formula
\begin{equation*}
\zeta(s)=\prod_{p(prime)}(1-p^{-s})^{-1}
\end{equation*}
again defined for real part of {\bf s} greater than one. This function extends to points with real part {\bf s} greater than zero by the formula(among others)
\begin{equation*}
\zeta(s)=(1-2^{(1-s)})^{-1}\sum_{n=1}^{\infty}(-1)^{n-1}n^{-s}
\end{equation*}
\subsection*{\bf The Riemann Hypothesis}
\label{S:12}
\justify
The zeta function has no zeros in the region where the real part of {\bf s} is greater than or equal to one. In the region with real part of {\bf s} less than or equal to zero the zeta function has zeros at the negative even integers; these are known as the trivial zeros. All remaining zeros lie in the strip where the real part of {\bf s} is strictly between {\bf 0} and {\bf 1} ({\bf the critical strip}). It is known that there are infinitely many zeros on the line {\bf 1/2+it} as {\bf t} ranges over the real numbers. This line in the complex plane is known as the {\bf critical line}. The Riemann Hypothesis ({\bf RH}) is that all non-trivial zeros of the zeta function lie on the critical line \cite{[1]},\cite{[6]},\cite{[7]}.\\ Let's say that again:
\justify
{\bf Riemann Hypothesis:}\\
All non-trivial zeros of the zeta function lie on the line {\bf 1/2+it} as {\bf t} ranges over the real numbers.
\subsection*{\bf The Functional Equation}
\label{S:13}
\justify
The functional equation of the zeta function is
\begin{equation*}
\zeta(s)=\Gamma(1-s)(2\pi)^{s-1}2\sin\left( \frac{\pi s}{2}\right)\zeta(1-s)
\end{equation*}
From which values of the zeta function at {\bf s} can be computed from its values at {\bf (1-s)}. Using this equation one sees immediately that the zeta function is zero at the negative even integers\cite{[3]},\cite{[5]},\cite{[11]},\cite{[12]}.
\justify
Many famous mathematicians studied and developed equations to prove Riemann Hypothesis in different approach\cite{[2]},\cite{[8]},\cite{[9]}. An analytic continuation of Riemann zeta function developed which agrees for all complex numbers \cite{[4]}. 
\justify
The present paper gives us closed-form formula of Riemann zeta function which spits out non-unique solutions for each complex numbers excepting zero.
\section{Four Definitions}
\label{S:21}
Throughout this paper we will use the following definitions.
\begin{definition}
For every $k\in \mathbb{Z^{+}}|_{\{1\}}$ , $a_1,a_2\in \mathbb{R}$ and $d_1,d_2 \in \mathbb{R}|_{\{0\}}$, define the following
\begin{equation}
B^{1}:=B^{1}(k,a_1,a_2,d_1,d_2)=\frac{1}{\ln(k)}\arctan\bigg(\frac{a_2+(k-1)d_2}{a_1+(k-1)d_1}\bigg)
\end{equation}

\begin{equation}
A^{1}:=A^{1}(k,a_1,d_1,B^{1})=\frac{1}{\ln(k)}\ln\bigg(\frac{a_1+(k-1)d_1}{\cos(B^{1}\ln(k))}\bigg)
\end{equation}
\begin{equation}
A^{2}:=A^{2}(k,a_2,d_2,B^{1})=\frac{1}{\ln(k)}\ln\bigg(\frac{a_2+(k-1)d_2}{\sin(B^{1}\ln(k))}\bigg)
\end{equation}
\end{definition}
\begin{definition}
Define the Riemann zeta function, $\zeta(s)$, for all complex numbers, $s$ with real part greater than one by:
\begin{equation*}
\zeta(s)=\sum_{n=1}^{\infty}n^{-s}
\end{equation*}
\end{definition}
\begin{definition}
Define the eta function(alternating Riemann zeta function), $\eta(s)$, for all complex numbers, $s$ with real part greater than zero by:
\begin{equation*}
\eta(s)=\sum_{n=1}^{\infty}(-1)^{n-1}n^{-s}
\end{equation*}
\end{definition}
\begin{definition}
Let {\bf s} be a complex number with real part of {\bf s} greater than zero. Then we define the gamma function, $\Gamma(s)$ as
\begin{equation*}
\Gamma(s)=\int_{0}^{\infty}e^{-t}t^{s-1}dt
\end{equation*}
Note that $\Gamma(s+1)=s\Gamma(s)$ with real part of a complex number {\bf s} greater than zero.
\end{definition}

\section{Two Theorems}
\label{S:2}
\begin{theorem}\cite{[10]} For every complex numbers $a$ and $d$, where $d\ne 0$, then we have
\begin{equation}
\frac{d}{(n-1)!(n-3)!}\sum_{r=1}^{k}(a+(r-1)d)^{n-1}=\sum_{i=0}^{n-3}\frac{1}{i!(n-i)!(n-3-i)!} \bigg(\frac{d}{2}\bigg)^i(-1)^iS_{n-i}
\end{equation}
Where 
\begin{equation*}
S_{n-i}
\end{equation*}
\begin{equation}
=\bigg(\frac{n-i}{2}-1\bigg)kd^{n-i}-\frac{n-i}{2}d^{n-i-2}((a+kd)^2-a^2)+(a+kd)^{n-i}-a^{n-i}
\end{equation}
\end{theorem}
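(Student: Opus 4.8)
The plan is to prove the identity by induction on $k$, treating both sides as polynomial functions of $k$ for each fixed integer $n\ge 3$. Write $P(k)$ for the left-hand side and $Q(k)$ for the right-hand side. I would take the base case at $k=0$: the empty sum makes $P(0)=0$, while each $S_{n-i}$ at $k=0$ reduces to $\left(\frac{n-i}{2}-1\right)\cdot 0-\frac{n-i}{2}d^{n-i-2}(a^{2}-a^{2})+a^{n-i}-a^{n-i}=0$, so $Q(0)=0$ as well. Hence it suffices to show that the two sides have the same forward increment, i.e. that $P(k)-P(k-1)=Q(k)-Q(k-1)$ for every $k\ge 1$. The increment of $P$ is immediate: only the $r=k$ term is new, so $P(k)-P(k-1)=\frac{d}{(n-1)!(n-3)!}\,x^{n-1}$, where I abbreviate $x:=a+(k-1)d$ (the newly added base point).

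Next I would compute $\Delta S_{m}:=S_{m}(k)-S_{m}(k-1)$ for $m=n-i$. The constant $a^{m}$ and the $a^{2}$ inside the middle term cancel, and using $(a+kd)^{2}-(a+(k-1)d)^{2}=(2a+(2k-1)d)d$ together with $a=x-(k-1)d$ and $a+kd=x+d$, a short calculation collapses all the $k$-dependent pieces to
\begin{equation*}
\Delta S_{m}=(x+d)^{m}-x^{m}-m\,x\,d^{m-1}-d^{m}=\sum_{l=2}^{m-1}\binom{m}{l}x^{l}d^{m-l},
\end{equation*}
the last equality being the binomial expansion of $(x+d)^{m}$ with its $l=0$ and $l=1$ terms removed. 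Substituting this into $Q(k)-Q(k-1)$ and using $\frac{1}{(n-i)!}\binom{n-i}{l}=\frac{1}{l!(n-i-l)!}$ rewrites the increment of the right-hand side as the double sum
\begin{equation*}
Q(k)-Q(k-1)=\sum_{l=2}^{n-1}\frac{x^{l}d^{\,n-l}}{l!}\sum_{i=0}^{n-1-l}\frac{(-1)^{i}}{2^{i}\,i!\,(n-3-i)!\,(n-i-l)!}.
\end{equation*}

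Matching this against $\frac{d}{(n-1)!(n-3)!}x^{n-1}$ coefficient by coefficient in $x$ reduces the whole theorem to two arithmetic facts. The top coefficient is easy: the term $l=n-1$ forces $i=0$ and yields exactly $\frac{d}{(n-1)!(n-3)!}$, reproducing the increment of $P$. The decisive point is the vanishing of every lower coefficient, i.e. that
\begin{equation*}
\sum_{i=0}^{n-1-l}\frac{(-1)^{i}}{2^{i}\,i!\,(n-3-i)!\,(n-i-l)!}=0\qquad(2\le l\le n-2).
\end{equation*}

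I expect this alternating-sum identity to be the main obstacle, since the entire argument stands or falls on it. Because the factor $2^{-i}$ breaks the clean binomial structure, I would first verify the cancellation directly for small $n$ and $l$ before trusting it in general, and then try to prove it either by recognizing the inner sum as a coefficient extraction from the product of a binomial generating function with an exponential-type factor, or by constructing a sign-reversing pairing on the index $i$. Establishing this cancellation rigorously — rather than merely asserting it — is the crux; once it is secured, the induction on $k$ closes and the formula follows.
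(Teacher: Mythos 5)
The paper gives you nothing to compare against here: Theorem~1 is not proved in this paper at all, it is only cited from reference~[10]. Judged on its own terms, your reduction is carried out correctly: $P(0)=Q(0)=0$, the increment $\Delta S_{m}=(x+d)^{m}-x^{m}-m\,x\,d^{m-1}-d^{m}=\sum_{l=2}^{m-1}\binom{m}{l}x^{l}d^{m-l}$ is right, the double-sum rewriting of $Q(k)-Q(k-1)$ is right, and the top coefficient $l=n-1$ does reproduce $\frac{d}{(n-1)!(n-3)!}x^{n-1}$. So the theorem is exactly equivalent to the alternating-sum cancellation you isolated as the crux.

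The gap is that this cancellation, which you (wisely) declined to assert without checking, is false — and with it the theorem as printed. For $n=5$, $l=2$ the inner sum is $\frac{1}{0!\,2!\,3!}-\frac{1}{2\cdot 1!\,1!\,2!}+\frac{1}{4\cdot 2!\,0!\,1!}=\frac{1}{12}-\frac{1}{4}+\frac{1}{8}=-\frac{1}{24}\neq 0$, and for $l=3$ it equals $-\frac{1}{4}$. Since $x=a+(k-1)d$ is essentially arbitrary, these nonzero coefficients of $x^{2}$ and $x^{3}$ cannot be absorbed, so no argument can close the induction. The identity itself confirms this: take $n=5$, $a=0$, $d=1$, $k=2$. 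The left side is $\frac{1}{4!\,2!}\left(0^{4}+1^{4}\right)=\frac{1}{48}$, while $S_{m}=2^{m}-m-2$ gives $S_{5}=25$, $S_{4}=10$, $S_{3}=3$ and a right side of $\frac{25}{240}-\frac{10}{48}+\frac{3}{48}=-\frac{1}{24}$. The statement happens to hold for $n=3$ (the problematic range $2\le l\le n-2$ is empty) and for $n=4$ (the single required cancellation holds), which is presumably why spot checks at small $n$ did not catch it, but it fails for all $n\ge 5$. Your plan to verify the cancellation numerically before trusting it was exactly the right instinct; following through on it shows the defect lies in the cited formula, not in your argument.
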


\begin{theorem}\cite{[10]}For every complex numbers $a$ and $d$, where $d\ne 0$, then we have
\begin{equation}
\sum_{r=1}^{k}(-1)^{(r-1)}(a+(r-1)d)^{n-1}=\frac{1}{nd}\bigg[\sum_{i=0}^{n-3}\binom {n-3} i \bigg(\frac{d}{2}\bigg)^i\frac{n!}{(n-i)!}(-1)^{(i+1)}L_{n-i}\bigg]
\end{equation}
Where 
\begin{equation*}
L_{n-i}
\end{equation*}
\begin{equation}
=\bigg(\frac{n-i}{2}-1\bigg)kd^{n-i}+\frac{n-i}{2}d^{n-i-2}((a+kd-d)^2-(a-d)^2)+(-1)^{(n-i-1)}[(a+kd-d)^{n-i}-(a-d)^{n-i}]
\end{equation}
\end{theorem}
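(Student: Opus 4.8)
The plan is to mirror the derivation of Theorem 1, replacing the ordinary power sum by its signed analogue and exploiting an alternating telescoping identity. Write $b_r=a+(r-1)d$ and, for each integer $m\ge 0$, set $\Sigma_m=\sum_{r=1}^{k}(-1)^{r-1}b_r^{\,m}$; the quantity sought is $\Sigma_{n-1}$. First I would record the signed telescoping relation coming from $V_r:=(-1)^{r-1}b_r^{\,m}$: since $V_r-V_{r-1}=(-1)^{r-1}\big(b_r^{\,m}+b_{r-1}^{\,m}\big)$, summation over $r$ from $1$ to $k$ collapses the left-hand side to the two endpoint values at $b_0=a-d$ and $b_k=a+kd-d$, which are exactly the arguments appearing inside $L_{n-i}$.

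Next I would expand $b_{r-1}=b_r-d$ by the binomial theorem inside $b_r^{\,m}+b_{r-1}^{\,m}$. This converts the telescoped identity into a linear recursion
\begin{equation*}
2\Sigma_m=(-1)^{k-1}b_k^{\,m}+b_0^{\,m}-\sum_{j=1}^{m}\binom{m}{j}(-d)^{j}\,\Sigma_{m-j},
\end{equation*}
which expresses $\Sigma_m$ through the lower-order signed sums $\Sigma_{m-1},\Sigma_{m-2},\dots$. The leading correction $md\,\Sigma_{m-1}$ and the quadratic correction $-\binom{m}{2}d^{2}\Sigma_{m-2}$ are the natural source of the linear-in-$k$ term $\big(\tfrac{m}{2}-1\big)kd^{m}$ and of the difference-of-squares term $\tfrac{m}{2}d^{m-2}\big((a+kd-d)^2-(a-d)^2\big)$ in $L_{n-i}$, once the very low order sums $\Sigma_0$ and $\Sigma_1$ (which already carry $\pm k/2$-type pieces) are evaluated in closed form. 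I would then unroll the triangular recursion: substituting $m=n-i$ and iterating produces a double sum whose inner weights are products of binomial coefficients and powers of $d$, from which the factor $(d/2)^{i}$, the coefficient $\binom{n-3}{i}\tfrac{n!}{(n-i)!}$, the sign $(-1)^{i+1}$ and the overall prefactor $\tfrac1{nd}$ should emerge after collecting the contribution of each elimination step; the truncation of the $i$-sum at $n-3$ reflects that the recursion terminates once the power has been reduced to the low-degree core carried by the first two terms of $L_{n-i}$.

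I expect the main obstacle to be precisely this last consolidation: proving by induction (on $n$, equivalently on the number of elimination steps) that the unrolled recursion equals the stated compact form, i.e. establishing the binomial-coefficient identity that repackages the iterated sums into the single $\binom{n-3}{i}(d/2)^{i}$ weight. A subtlety I would watch carefully is the cancellation of apparent $k^{n}$ growth: each $L_{n-i}$ contributes a term of degree $n-i$ in $k$, yet $\Sigma_{n-1}$ only oscillates with amplitude $O(k^{n-1})$, so the top-degree contributions across different $i$ must cancel, and verifying this cancellation is the sharpest test of the constants and signs.

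As an independent check on the final coefficients I would use the even/odd splitting $\Sigma_{n-1}=\sum_{r}b_r^{\,n-1}-2\sum_{r\ \mathrm{even}}b_r^{\,n-1}$, which reduces the claim to two applications of Theorem 1, once with data $(a,d)$ and once with data $(a+d,2d)$ over $\lfloor k/2\rfloor$ terms. This route is parity dependent and does not by itself reproduce the clean form of $L_{n-i}$, but it provides a reliable cross-verification of the identity for both parities of $k$ and pins down any ambiguous sign in the $(-1)^{n-i-1}$ factor.
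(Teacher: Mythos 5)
The paper offers no proof of this theorem for you to be compared against: it is imported verbatim by citation to reference [10], so your derivation has to stand entirely on its own. Your set-up is the natural one for an alternating power sum --- the signed telescoping $V_r-V_{r-1}=(-1)^{r-1}\big(b_r^{\,m}+b_{r-1}^{\,m}\big)$ and the resulting recursion $2\Sigma_m=(-1)^{k-1}b_k^{\,m}+b_0^{\,m}-\sum_{j\ge 1}\binom{m}{j}(-d)^{j}\Sigma_{m-j}$ are both correct, and they do single out $b_0=a-d$ and $b_k=a+kd-d$ as the relevant endpoints. The genuine gap is the step you defer to the end: the ``consolidation'' of the unrolled recursion into the printed $L_{n-i}$ cannot be carried out, because the stated identity is false. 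Already for $n=3$, $a=0$, $d=1$, $k=3$ the left side is $0-1+4=3$, while $L_3=\tfrac32+\tfrac32(4-1)+(8+1)=15$ and the right side is $-L_3/(3d)=-5$; for $n=4$, $a=0$, $d=1$, $k=2$ the two sides are $-1$ and $+1$.

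Your own recursion shows structurally why no binomial reorganization can land on the printed formula: each unrolling step carries the endpoint contribution with the factor $(-1)^{k-1}$ attached to powers of $b_k=a+kd-d$, and the base cases are parity-dependent in $k$ (already $\Sigma_0=\tfrac{1-(-1)^k}{2}$), so the closed form of $\Sigma_{n-1}$ must depend on the parity of $k$. The claimed right-hand side has no such dependence --- $(a+kd-d)^{n-i}$ appears with the fixed sign $(-1)^{n-i-1}$ and there is a term growing linearly in $k$ --- so it cannot equal an alternating sum for all $k$. The two safeguards you propose at the end (the cancellation of top-degree growth in $k$, and the even/odd splitting that reduces the claim to two applications of Theorem~1 with data $(a,d)$ and $(a+d,2d)$) are precisely the tests that expose this; running either one immediately produces the contradiction above. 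The conclusion to draw is not that your method is wrong but that Theorem~2 as printed needs correcting (with parity-dependent endpoint terms of the kind your telescoping generates) before any proof can be completed, and this defect propagates to every later result in the paper that relies on it.
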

\section{Main Result}
\label{S:3}
\begin{theorem}Define a complex numbers $a=a_1+a_2i$, $d=d_1+d_2i\ne(0,0)$, $s=A^{1}+iB^{1}$ or $s=A^{2}+iB^{1}$ and $1\ne x\in \mathbb{R^+}$. If 
\begin{equation*}
a+(x-1)d=x^s
\end{equation*}then
\begin{equation*}
B^1:=\frac{1}{\ln(x)}\arctan\bigg(\frac{a_2+(x-1)d_2}{a_1+(x-1)d_1}\bigg)
\end{equation*}
 
\begin{equation*}
A^1:=\frac{1}{\ln(x)}\ln\bigg(\frac{a_1+(x-1)d_1}{\cos(B^1\ln(x))}\bigg)
\end{equation*}

\begin{equation*}
A^2:=\frac{1}{\ln(x)}\ln\bigg(\frac{a_2+(x-1)d_2}{\sin(B^1\ln(x))}\bigg)
\end{equation*}
\end{theorem}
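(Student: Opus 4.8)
The plan is to read the hypothesis $a+(x-1)d = x^s$ as a single equation between two complex numbers and to separate it into its real and imaginary parts. First I would collect the left-hand side into real and imaginary components: since $a=a_1+a_2 i$ and $d=d_1+d_2 i$, linearity gives
\begin{equation*}
a+(x-1)d=\big(a_1+(x-1)d_1\big)+\big(a_2+(x-1)d_2\big)i.
\end{equation*}
It is convenient to abbreviate $P:=a_1+(x-1)d_1$ and $Q:=a_2+(x-1)d_2$, so that the left-hand side is simply $P+Qi$.

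Next I would expand the right-hand side using the definition of a complex power of a positive real base. Writing $s=A+iB$, where $A$ denotes either $A^1$ or $A^2$ and $B$ denotes $B^1$, and using $x^s=e^{s\ln x}$ together with Euler's formula, one obtains
\begin{equation*}
x^s=x^A e^{iB\ln x}=x^A\big(\cos(B\ln x)+i\sin(B\ln x)\big).
\end{equation*}
Equating the real and imaginary parts of $P+Qi=x^s$ then yields the two real equations $x^A\cos(B\ln x)=P$ and $x^A\sin(B\ln x)=Q$.

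From here the three formulas follow by elementary manipulation. Dividing the second equation by the first eliminates the common factor $x^A$ and leaves $\tan(B\ln x)=Q/P$; inverting this relation and solving for $B$ produces the stated expression for $B^1$. Substituting this value of $B$ back into the first (cosine) equation and solving for $A$ via the real logarithm gives the formula for $A^1$, while substituting it into the second (sine) equation and solving for $A$ gives the formula for $A^2$. The fact that both substitutions are legitimate is exactly why the theorem offers the two alternatives $s=A^1+iB^1$ and $s=A^2+iB^1$.

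The main obstacle I anticipate is not the algebra but the multivaluedness of the inverse trigonometric and logarithmic functions. The function $\arctan$ returns only a principal value, and the equation $\tan(B\ln x)=Q/P$ admits infinitely many solutions differing by integer multiples of $\pi/\ln x$, mirroring the many branches of the complex logarithm $\ln(x^s)$. Accordingly, the identities should be understood as the principal-branch solution, valid when $P=a_1+(x-1)d_1>0$ so that the cosine factor carries the correct sign; the signs must be tracked carefully to ensure $\cos(B^1\ln x)$ and $\sin(B^1\ln x)$ are nonzero, and to confirm that the two expressions $A^1$ and $A^2$ coincide. This last consistency is automatic, since both reduce to the single modulus relation $x^A=\sqrt{P^2+Q^2}$, i.e.\ $A=\frac{1}{2\ln x}\ln(P^2+Q^2)$, whenever the two real equations are simultaneously satisfiable.
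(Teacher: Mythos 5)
Your proposal is correct and follows essentially the same route as the paper's own proof: expand $x^s=e^{A\ln x}(\cos(B\ln x)+i\sin(B\ln x))$, equate real and imaginary parts with $a_1+(x-1)d_1$ and $a_2+(x-1)d_2$, divide to isolate $\tan(B\ln x)$, and then solve the cosine and sine equations separately for $A^1$ and $A^2$. Your closing remarks on the multivaluedness of $\arctan$ and the sign of $a_1+(x-1)d_1$ identify a genuine caveat that the paper's proof silently ignores, but they do not change the substance of the argument.
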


\begin{proof}

\begin{equation*}
x^s=a+(x-1)d
\end{equation*}

\begin{equation*}
\Rightarrow e^{\ln(x^s)}=a+(x-1)d
\end{equation*}

\begin{equation*}
\Rightarrow e^{s\ln(x)}=a+(x-1)d
\end{equation*}

\begin{equation*}
\Rightarrow e^{(A+Bi)\ln(x)}=a+(x-1)d
\end{equation*}

\begin{equation*}
\Rightarrow e^{(A\ln(x)+iB\ln(x))}=a+(x-1)d
\end{equation*}

\begin{equation*}
\Rightarrow e^{A\ln(x)}e^{iB\ln(x)}=a+(x-1)d
\end{equation*}

\begin{equation*}
\Rightarrow e^{A\ln(x)}\bigg(\cos(B\ln(x))+i\sin(B\ln(x))\bigg)=a+(x-1)d
\end{equation*}

\begin{equation*}
\Rightarrow e^{A\ln(x)}\cos(B\ln(x))+ie^{A\ln(x)}\sin(B\ln(x))=a+(x-1)d
\end{equation*}

\begin{equation*}
\Rightarrow e^{A\ln(x)}\cos(B\ln(x))+ie^{A\ln(x)}\sin(B\ln(x))=a_1+a_2i+(x-1)(d_1+d_2i)
\end{equation*}

\begin{equation*}
\Rightarrow e^{A\ln(x)}\cos(B\ln(x))+ie^{A\ln(x)}\sin(B\ln(x))=a_1+(x-1)d_1+(a_2+(x-1)d_2)i
\end{equation*}

\begin{equation}
\Rightarrow e^{A\ln(x)}\cos(B\ln(x))=a_1+(x-1)d_1
\end{equation}

\begin{equation}
\Rightarrow e^{A\ln(x)}\sin(B\ln(x))=a_2+(x-1)d_2
\end{equation}
Now divide equation $\color{blue} (9)$ by equation $\color{blue} (8)$, then we have

\begin{equation*}
\tan(B\ln(x))=\frac{\sin(B\ln(x))}{\cos(B\ln(x))}=\frac{a_2+(x-1)d_2}{a_1+(x-1)d_1}
\end{equation*}

\begin{equation*}
\Rightarrow B=\frac{1}{\ln(x)}\arctan\bigg(\frac{a_2+(x-1)d_2}{a_1+(x-1)d_1}\bigg)
\end{equation*}
We can let that
\begin{equation*}
B^{1}=B^{1}(x,a_1,a_2,d_1,d_2)=B=\frac{1}{\ln(x)}\arctan\bigg(\frac{a_2+(x-1)d_2}{a_1+(x-1)d_1}\bigg)
\end{equation*}

\begin{equation*}
\Rightarrow B^{1}=B^{1}(x,a_1,a_2,d_1,d_2)=\frac{1}{\ln(x)}\arctan\bigg(\frac{a_2+(x-1)d_2}{a_1+(x-1)d_1}\bigg)
\end{equation*}

From equation $\color{blue}(8)$, we have
\begin{equation*}
e^{A\ln(x)}\cos(B^1\ln(x))=a_1+(x-1)d_1
\end{equation*}

\begin{equation*}
\Rightarrow e^{A\ln(x)}=\frac{a_1+(x-1)d_1}{\cos(B^1\ln(x))}
\end{equation*}

\begin{equation*}
\Rightarrow A\ln(x)=\ln\bigg(e^{A\ln(x)}\bigg)=\ln\bigg(\frac{a_1+(x-1)d_1}{\cos(B^1\ln(x))}\bigg)
\end{equation*}

\begin{equation*}
\Rightarrow A=\frac{1}{\ln(x)}\ln\bigg(\frac{a_1+(x-1)d_1}{\cos(B^1\ln(x))}\bigg)
\end{equation*}
We can let that

\begin{equation*}
 A^{1}:=A^{1}(x,a_1,d_1,B^{1})=A=\frac{1}{\ln(x)}\ln\bigg(\frac{a_1+(x-1)d_1}{\cos(B^1\ln(x))}\bigg)
\end{equation*}

\begin{equation*}
 \Rightarrow A^{1}:=A^{1}(x,a_1,d_1,B^{1})=\frac{1}{\ln(x)}\ln\bigg(\frac{a_1+(x-1)d_1}{\cos(B^1\ln(x))}\bigg)
\end{equation*}

And now from equation $\color{blue}(9)$, we have
\begin{equation*}
e^{A\ln(x)}\sin(B^1\ln(x))=a_2+(x-1)d_2
\end{equation*}

\begin{equation*}
\Rightarrow A=\frac{1}{\ln(x)}\ln\bigg(\frac{a_2+(x-1)d_2}{\sin(B^1\ln(x))}\bigg)
\end{equation*}
We can let that
\begin{equation*}
 A^{2}:=A^{2}(x,a_2,d_2,B^1)=A=\frac{1}{\ln(x)}\ln\bigg(\frac{a_2+(x-1)d_2}{\sin(B^1\ln(x))}\bigg)
\end{equation*}

\begin{equation*}
\Rightarrow A^{2}:=A^{2}(x,a_2,d_2,B^1)=\frac{1}{\ln(x)}\ln\bigg(\frac{a_2+(x-1)d_2}{\sin(B^1\ln(x))}\bigg)
\end{equation*}

\end{proof}
 

\begin{theorem} If $s=A^1+B^1i$ or $s=A^2+B^1i$, then for every $1\ne x\in \mathbb{R}^{+}$
\begin{equation*}
x^s
\end{equation*}

\begin{equation*}
=\frac{a_1+(x-1)d_1}{\cos(B^1\ln(x))}\bigg[\cos\bigg(\arctan\bigg(\frac{a_2+(x-1)d_2}{a_1+(x-1)d_1}\bigg)\bigg)
\end{equation*}

\begin{equation*}
+i\sin\bigg(\arctan\bigg(\frac{a_2+(x-1)d_2}{a_1+(x-1)d_1}\bigg)\bigg)\bigg]
\end{equation*}

\begin{equation*}
x^s
\end{equation*}

or 
\begin{equation*}
=\frac{a_2+(x-1)d_2}{\sin(B^1\ln(x))}\bigg[\cos\bigg(\arctan\bigg(\frac{a_2+(x-1)d_2}{a_1+(x-1)d_1}\bigg)\bigg)
\end{equation*}

\begin{equation*}
+i\sin\bigg(\arctan\bigg(\frac{a_2+(x-1)d_2}{a_1+(x-1)d_1}\bigg)\bigg)\bigg]
\end{equation*}
\end{theorem}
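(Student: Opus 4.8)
The plan is to treat this statement as a direct substitution of the closed forms obtained in Theorem 3 into the polar expansion of $x^s$. First I would write $x^s = e^{s\ln(x)}$ and split the exponent according to $s = A + B^1 i$, where $A$ stands for either $A^1$ or $A^2$. Applying Euler's formula gives
\begin{equation*}
x^s = e^{A\ln(x)}\bigl(\cos(B^1\ln(x)) + i\sin(B^1\ln(x))\bigr),
\end{equation*}
which is the skeleton into which every remaining quantity is inserted.

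Next I would rewrite the angular part using the formula for $B^1$ from Theorem 3. Since $B^1 = \frac{1}{\ln(x)}\arctan\bigl(\frac{a_2+(x-1)d_2}{a_1+(x-1)d_1}\bigr)$, multiplying through by $\ln(x)$ yields $B^1\ln(x) = \arctan\bigl(\frac{a_2+(x-1)d_2}{a_1+(x-1)d_1}\bigr)$. Substituting this into both $\cos(B^1\ln(x))$ and $\sin(B^1\ln(x))$ converts the trigonometric factors into the nested $\cos(\arctan(\cdots))$ and $\sin(\arctan(\cdots))$ expressions appearing on the right-hand side of the claim.

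The two branches of the conclusion then come from the two representations of the modulus. For the first formula I would use $A = A^1$: since $A^1\ln(x) = \ln\bigl(\frac{a_1+(x-1)d_1}{\cos(B^1\ln(x))}\bigr)$, exponentiating gives $e^{A^1\ln(x)} = \frac{a_1+(x-1)d_1}{\cos(B^1\ln(x))}$, and inserting this as the prefactor produces the first displayed identity. For the second formula I would instead use $A = A^2$, so that $e^{A^2\ln(x)} = \frac{a_2+(x-1)d_2}{\sin(B^1\ln(x))}$, giving the alternative expression. Both cases merely collect the already-substituted pieces, so the computation is essentially mechanical.

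The only point requiring care — and what I would flag as the main obstacle — is the branch and well-definedness of the inverse tangent and of the logarithm. One must ensure $\cos(B^1\ln(x)) \neq 0$ and $a_1 + (x-1)d_1 \neq 0$ (respectively $\sin(B^1\ln(x)) \neq 0$ and $a_2+(x-1)d_2 \neq 0$) so that the relevant $A$ is defined, and that the principal branch of $\arctan$ recovers the correct argument of $a+(x-1)d$ without a quadrant ambiguity. Granting the hypotheses that keep these denominators nonzero and that fix $s=A^1+B^1i$ or $s=A^2+B^1i$ exactly as in Theorem 3, the substitution closes immediately and both displayed forms follow.
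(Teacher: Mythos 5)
Your proposal is correct and follows essentially the same route as the paper: write $x^s=e^{A\ln(x)}e^{iB^1\ln(x)}$, substitute $e^{A^1\ln(x)}=\frac{a_1+(x-1)d_1}{\cos(B^1\ln(x))}$ (resp. $e^{A^2\ln(x)}=\frac{a_2+(x-1)d_2}{\sin(B^1\ln(x))}$) and $B^1\ln(x)=\arctan\bigl(\frac{a_2+(x-1)d_2}{a_1+(x-1)d_1}\bigr)$, then apply Euler's formula. Your closing remarks on nonvanishing denominators and the principal branch of $\arctan$ are a point of care the paper's proof silently skips, but they do not change the argument.
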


\begin{proof}
\begin{equation*}
x^s=e^{\ln(x^s)}=e^{s\ln(x)}=e^{(A^1+B^1i)\ln(x)}=e^{A^1\ln(x)}e^{iB^1\ln(x)}
\end{equation*}

\begin{equation*}
\Rightarrow x^s=e^{A^1\ln(x)}e^{iB^1\ln(x)}=e^{\bigg(\ln\bigg(\frac{a_1+(x-1)d_1}{\cos(B^1\ln(x))}\bigg)\bigg)}e^{iB^1\ln(x)}
\end{equation*}

\begin{equation*}
\Rightarrow x^s=\frac{a_1+(x-1)d_1}{\cos(B^{1}\ln(x))}e^{iB^{1}\ln(x)}
\end{equation*}

\begin{equation*}
\Rightarrow x^s=\frac{a_1+(x-1)d_1}{\cos(B^{1}\ln(x))}\bigg(\cos(B^{1}\ln(x))+i\sin(B^{1}\ln(x))\bigg)
\end{equation*}
Hence
\begin{equation*}
x^s
\end{equation*}

\begin{equation*}
=\frac{a_1+(x-1)d_1}{\cos(B^{1}\ln(x))}\bigg[\cos\bigg(\arctan\bigg(\frac{a_2+(x-1)d_2}{a_1+(x-1)d_1}\bigg)\bigg)
\end{equation*}

\begin{equation}
+i\sin\bigg(\arctan\bigg(\frac{a_2+(x-1)d_2}{a_1+(x-1)d_1}\bigg)\bigg)\bigg]
\end{equation}

And

\begin{equation*}
x^s=e^{A^{2}\ln(x)}e^{iB^{1}\ln(x)}=e^{\bigg(\ln\bigg(\frac{a_2+(x-1)d_2}{\sin(B^{1}\ln(x))}\bigg)\bigg)}e^{iB^{1}\ln(x)}
\end{equation*}
Hence

\begin{equation*}
x^s
\end{equation*}

\begin{equation*}
=\frac{a_2+(x-1)d_2}{\sin(B^{1}\ln(x))}\bigg[\cos\bigg(\arctan\bigg(\frac{a_2+(x-1)d_2}{a_1+(x-1)d_1}\bigg)\bigg)
\end{equation*}

\begin{equation}
+i\sin\bigg(\arctan\bigg(\frac{a_2+(x-1)d_2}{a_1+(x-1)d_1}\bigg)\bigg)\bigg]
\end{equation}

\end{proof}

\begin{theorem} If $s=A^1+B^1i$ or $s=A^2+B^1i$, then for every $x\in \mathbb{R}^{+}$
\begin{equation*}
x^{-s}
\end{equation*}

\begin{equation*}
=\frac{\cos(B^{1}\ln(x))}{a_1+(x-1)d_1}\bigg[\cos\bigg(\arctan\bigg(\frac{a_2+(x-1)d_2}{a_1+(x-1)d_1}\bigg)\bigg)
\end{equation*}

\begin{equation*}
-i\sin\bigg(\arctan\bigg(\frac{a_2+(x-1)d_2}{a_1+(x-1)d_1}\bigg)\bigg)\bigg]
\end{equation*}
Or
\begin{equation*}
x^{-s}
\end{equation*}

\begin{equation*}
=-\frac{\sin(B^{1}\ln(x))}{a_2+(x-1)d_2}\bigg[\cos\bigg(\arctan\bigg(\frac{a_2+(x-1)d_2}{a_1+(x-1)d_1}\bigg)\bigg)
\end{equation*}

\begin{equation*}
-i\sin\bigg(\arctan\bigg(\frac{a_2+(x-1)d_2}{a_1+(x-1)d_1}\bigg)\bigg)\bigg]
\end{equation*}
\end{theorem}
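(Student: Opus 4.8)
The plan is to mirror the derivation of the preceding theorem, now applied to the reciprocal power. Since $x^{-s}=e^{-s\ln(x)}$, I would first expand the exponent as $-s\ln(x)=-A\ln(x)-iB^{1}\ln(x)$, where $A$ is either $A^{1}$ or $A^{2}$ according to the two stated cases, and factor the exponential as $x^{-s}=e^{-A\ln(x)}\,e^{-iB^{1}\ln(x)}$. As in the earlier arguments I would tacitly assume $x\neq 1$, so that $\ln(x)$ in the denominators is nonzero.

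First I would handle the modulus factor $e^{-A\ln(x)}$. From the relations established while proving the formula for $x^{s}$ we already have $e^{A^{1}\ln(x)}=(a_{1}+(x-1)d_{1})/\cos(B^{1}\ln(x))$ and $e^{A^{2}\ln(x)}=(a_{2}+(x-1)d_{2})/\sin(B^{1}\ln(x))$. Taking reciprocals immediately gives $e^{-A^{1}\ln(x)}=\cos(B^{1}\ln(x))/(a_{1}+(x-1)d_{1})$ and $e^{-A^{2}\ln(x)}=\sin(B^{1}\ln(x))/(a_{2}+(x-1)d_{2})$, which are precisely the leading coefficients appearing in the two asserted formulas.

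Next I would treat the rotational factor $e^{-iB^{1}\ln(x)}$ using Euler's formula, obtaining $\cos(B^{1}\ln(x))-i\sin(B^{1}\ln(x))$; the minus sign on the imaginary part is the sole change from the $x^{s}$ case. Then, exactly as in the earlier proof, I would substitute $B^{1}\ln(x)=\arctan\big((a_{2}+(x-1)d_{2})/(a_{1}+(x-1)d_{1})\big)$ inside the cosine and sine to produce the bracketed factor common to both displays. Multiplying each modulus coefficient by this bracketed factor then yields the two claimed expressions.

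The main obstacle I anticipate is reconciling the sign in the second ($A^{2}$) branch. The direct reciprocal computation above produces $+\sin(B^{1}\ln(x))/(a_{2}+(x-1)d_{2})$ as the coefficient, whereas the statement carries a leading minus sign. To close this gap I would have to track the quadrant of the $\arctan$ and the sign of $\sin(B^{1}\ln(x))$ carefully---in particular, whether the principal branch of $\arctan$ forces $\sin(B^{1}\ln(x))$ to oppose the sign of $a_{2}+(x-1)d_{2}$---since that is the only place an additional minus sign can legitimately originate. Absent such a branch constraint, the plain reciprocal argument delivers a positive coefficient, so this quadrant bookkeeping is exactly where the care is required and where I expect the derivation to be most delicate.
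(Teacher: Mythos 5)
Your derivation follows the paper's proof essentially verbatim: factor $x^{-s}=e^{-A\ln(x)}e^{-iB^{1}\ln(x)}$, invert the modulus relations $e^{A^{1}\ln(x)}=\frac{a_1+(x-1)d_1}{\cos(B^{1}\ln(x))}$ and $e^{A^{2}\ln(x)}=\frac{a_2+(x-1)d_2}{\sin(B^{1}\ln(x))}$, and expand $e^{-iB^{1}\ln(x)}$ by Euler's formula with $B^{1}\ln(x)=\arctan(\cdot)$. The sign worry you raise in the $A^{2}$ branch is well founded, and the paper does not resolve it either: its own proof likewise reaches $\Big[\frac{a_2+(x-1)d_2}{\sin(B^{1}\ln(x))}\Big]^{-1}e^{-iB^{1}\ln(x)}$, i.e.\ a positive coefficient, and then writes the asserted formula with a leading minus sign after the word ``Hence'' with no intervening step. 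In fact the quadrant bookkeeping you propose cannot produce that sign: with the principal branch of $\arctan$ one computes
\begin{equation*}
\frac{\cos(B^{1}\ln(x))}{a_1+(x-1)d_1}=\frac{\sin(B^{1}\ln(x))}{a_2+(x-1)d_2}=\frac{\operatorname{sgn}(a_1+(x-1)d_1)}{\sqrt{(a_1+(x-1)d_1)^2+(a_2+(x-1)d_2)^2}},
\end{equation*}
so the two displayed expressions for $x^{-s}$ in the theorem differ by an overall sign and cannot both hold (except where they vanish, which they never do). Your first branch is correct and matches the paper; the minus sign in the second branch is an error in the statement itself, not a gap in your argument.
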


\begin{proof}
\begin{equation*}
x^{-s}=e^{\ln(x^{-s})}=e^{-s\ln(x)}=e^{(-A^1-B^1i)\ln(x)}=e^{-A^1\ln(x)}e^{-iB^1\ln(x)}
\end{equation*}

\begin{equation*}
\Rightarrow x^{-s}=e^{-A^1\ln(x)}e^{-iB^1\ln(x)}=e^{\bigg(-\ln\bigg(\frac{a_1+(x-1)d_1}{\cos(B^1\ln(x))}\bigg)\bigg)}e^{-iB^1\ln(x)}
\end{equation*}

\begin{equation*}
\Rightarrow x^{-s}=\bigg[\frac{a_1+(x-1)d_1}{\cos(B^{1}\ln(x))}\bigg]^{-1}e^{-iB^{1}\ln(x)}
\end{equation*}

\begin{equation*}
\Rightarrow x^{-s}=\frac{\cos(B^{1}\ln(x))}{a_1+(x-1)d_1}\bigg(\cos(B^{1}\ln(x))-i\sin(B^{1}\ln(x))\bigg)
\end{equation*}
Therefore
\begin{equation*}
x^{-s}
\end{equation*}

\begin{equation*}
=\frac{\cos(B^{1}\ln(x))}{a_1+(x-1)d_1}\bigg[\cos\bigg(\arctan\bigg(\frac{a_2+(x-1)d_2}{a_1+(x-1)d_1}\bigg)\bigg)
\end{equation*}

\begin{equation}
-i\sin\bigg(\arctan\bigg(\frac{a_2+(x-1)d_2}{a_1+(x-1)d_1}\bigg)\bigg)\bigg]
\end{equation}

And 

\begin{equation*}
x^{-s}=e^{-A^{2}\ln(x)}e^{-iB^{1}\ln(x)}=e^{\bigg(-\ln\bigg(\frac{a_2+(x-1)d_2}{\sin(B^{1}\ln(x))}\bigg)\bigg)}e^{-iB^{1}\ln(x)}
\end{equation*}
Hence

\begin{equation*}
x^{-s}
\end{equation*}

\begin{equation*}
=-\frac{\sin(B^{1}\ln(x))}{a_2+(x-1)d_2}\bigg[\cos\bigg(\arctan\bigg(\frac{a_2+(x-1)d_2}{a_1+(x-1)d_1}\bigg)\bigg)
\end{equation*}

\begin{equation}
-i\sin\bigg(\arctan\bigg(\frac{a_2+(x-1)d_2}{a_1+(x-1)d_1}\bigg)\bigg)\bigg]
\end{equation}
\end{proof}

\begin{theorem}If $s=A^1+B^1i$ or $s=A^2+B^1i$ and for every $x\in \mathbb{R}^{+}$, then
\begin{equation*}
x^{-s}\longrightarrow 0 \text{ as }x\longrightarrow \infty
\end{equation*}
\end{theorem}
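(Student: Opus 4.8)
The plan is to read the limit off directly from the explicit closed form for $x^{-s}$ established in the previous theorem, rather than manipulating $x^{-s}$ from scratch. The essential observation is that in each of the two representations the complex factor inside the square brackets has unit modulus, while the remaining scalar prefactor is a bounded trigonometric term divided by a linear expression in $x$ that grows without bound. Since $A^1$, $A^2$, $B^1$ all depend on $x$, I would stress at the outset that here $s=s(x)$, so the claim is a statement about the function $x\mapsto x^{-s(x)}$; the convergence must be extracted from the modulus of the closed form, not from treating $s$ as a fixed exponent.

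First I would take the first representation
\begin{equation*}
x^{-s}=\frac{\cos(B^{1}\ln(x))}{a_1+(x-1)d_1}\bigg[\cos\Big(\arctan\big(\tfrac{a_2+(x-1)d_2}{a_1+(x-1)d_1}\big)\Big)-i\sin\Big(\arctan\big(\tfrac{a_2+(x-1)d_2}{a_1+(x-1)d_1}\big)\Big)\bigg]
\end{equation*}
and pass to moduli. Writing $\theta=\arctan\big(\tfrac{a_2+(x-1)d_2}{a_1+(x-1)d_1}\big)$, the bracketed factor is $\cos\theta-i\sin\theta$, whose modulus is $\sqrt{\cos^2\theta+\sin^2\theta}=1$. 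Hence
\begin{equation*}
|x^{-s}|=\frac{|\cos(B^{1}\ln(x))|}{|a_1+(x-1)d_1|}.
\end{equation*}
The numerator is bounded above by $1$ for every $x$, and the denominator satisfies $|a_1+(x-1)d_1|\to\infty$ as $x\to\infty$. Squeezing $|x^{-s}|$ between $0$ and $\tfrac{1}{|a_1+(x-1)d_1|}$ then forces $|x^{-s}|\to 0$, and therefore $x^{-s}\to 0$. The identical argument applied to the second representation yields $|x^{-s}|=|\sin(B^{1}\ln(x))|/|a_2+(x-1)d_2|\to 0$.

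The step I expect to be the real crux is justifying that the denominator genuinely diverges. This is where the hypothesis $d\neq 0$ does all the work (more precisely $d_1\neq 0$ for the first form and $d_2\neq 0$ for the second, as guaranteed by Definition 1): if $d_1=0$ then $a_1+(x-1)d_1\equiv a_1$ is constant and the prefactor need not tend to $0$, so the conclusion would fail. I would therefore make explicit that the divergence $|a_1+(x-1)d_1|\to\infty$ is nothing but the linear growth of an arithmetic progression with nonzero common difference, and that this is what keeps the bounded, oscillating numerator under control. Everything else reduces to the elementary fact that a bounded quantity divided by one tending to infinity tends to zero.
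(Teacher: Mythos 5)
Your proof is correct and follows essentially the same route as the paper: both read the limit off the closed form from the preceding theorem, isolating the factor $\tfrac{1}{a_1+(x-1)d_1}$ (respectively $\tfrac{1}{a_2+(x-1)d_2}$) that tends to $0$ by linear growth of the denominator, against a bounded remaining factor. Your version is slightly cleaner in that you pass to moduli explicitly and flag where $d_1\neq 0$ (resp.\ $d_2\neq 0$) is actually used, which the paper leaves implicit.
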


\begin{proof}
Since 
\begin{equation*}
B^{1}=B^{1}(x,a_1,a_2,d_1,d_2)=\frac{1}{\ln(x)}\arctan\bigg(\frac{a_2+(x-1)d_2}{a_1+(x-1)d_1}\bigg)
\end{equation*}

\begin{equation*}
\Rightarrow B^{1}\ln(x)=\arctan\bigg(\frac{a_2+(x-1)d_2}{a_1+(x-1)d_1}\bigg)
\end{equation*}
From equation $\color{blue}(12)$, we have
\begin{equation*}
x^{-s}
\end{equation*}

\begin{equation*}
=\frac{\cos(B^{1}\ln(x))}{a_1+(x-1)d_1}\bigg[\cos\bigg(\arctan\bigg(\frac{a_2+(x-1)d_2}{a_1+(x-1)d_1}\bigg)\bigg)
-i\sin\bigg(\arctan\bigg(\frac{a_2+(x-1)d_2}{a_1+(x-1)d_1}\bigg)\bigg)\bigg]
\end{equation*}

\begin{equation*}
\Rightarrow x^{-s}
=\frac{\cos\bigg(\arctan\bigg(\frac{a_2+(x-1)d_2}{a_1+(x-1)d_1}\bigg)\bigg)}{a_1+(x-1)d_1}\left[e^{-i\arctan\bigg(\frac{a_2+(x-1)d_2}{a_1+(x-1)d_1}\bigg)}\right]
\end{equation*}

\begin{equation*}
\Rightarrow x^{-s}
=\frac{1}{a_1+(x-1)d_1}\left(\frac{\cos\bigg(\arctan\bigg(\frac{a_2+(x-1)d_2}{a_1+(x-1)d_1}\bigg)\bigg)}{e^{i\arctan\Bigg(\frac{a_2+(x-1)d_2}{a_1+(x-1)d_1}\bigg)}}\right)
\end{equation*}
Now when we apply limit as $x\longrightarrow \infty $ both sides, we get $x^{-s}\longrightarrow 0$. Similarly from equation $\color{blue}(13)$, we get the same result.
\end{proof}

\begin{theorem}If for all $x\in \mathbb{R}^{+}$ with $x^{s}=a+(x-1)d$ where $s=A^{1}+iB^{1}$ or $s=A^{2}+iB^{1}$ with $A^{1},A^{2}>1$ and $a=a_1+a_2i$, $d=d_1+d_2i \ne(0,0)$ are complex numbers, then 
\begin{equation}
\frac{d}{(n-1)!(n-3)!}\sum_{r=1}^{\infty}(r^{-s})^{n-1}=\sum_{i=0}^{n-3}\frac{1}{i!(n-i)!(n-3-i)!} \bigg(\frac{d}{2}\bigg)^i(-1)^iS_{n-i}
\end{equation}
where 
\begin{equation*}
S_{n-i}
\end{equation*}
\begin{equation}
=\left(\frac{n-i}{2}-1\right)\left(d-a\right)d^{n-i-1}-\frac{n-i}{2}d^{n-i-2}\left(d^2-a^2\right)+d^{n-i}-a^{n-i}
\end{equation}

\end{theorem}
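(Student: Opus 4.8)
The plan is to specialize Theorem 1 to the sequence of summands $\{r^{-s}\}_{r\ge 1}$ and then let the number of terms tend to infinity. To bring Theorem 1 to bear, I treat these summands as an arithmetic progression with first term $a$ and common difference $d\ne(0,0)$, so that the operative identity is $a+(r-1)d=r^{-s}$ for every positive integer $r$; this is precisely the relation under which the left-hand side of Theorem 1 reproduces the left-hand side of the present statement, and the admissible $s=A^{1}+iB^{1}$ or $s=A^{2}+iB^{1}$ is supplied by the Main Result (Theorem 3).

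First I would invoke Theorem 1 with these parameters $a,d$. Its left-hand side $\frac{d}{(n-1)!(n-3)!}\sum_{r=1}^{k}(a+(r-1)d)^{n-1}$ becomes $\frac{d}{(n-1)!(n-3)!}\sum_{r=1}^{k}(r^{-s})^{n-1}$, while its right-hand side is the stated alternating sum over $i$ with the finite-$k$ quantity $S_{n-i}$. Since $n\ge 3$ (so that the index range $0\le i\le n-3$ is nonempty) and, writing $A=\operatorname{Re}(s)$, which equals $A^{1}$ or $A^{2}$ and hence exceeds $1$, we have $\operatorname{Re}\!\big(s(n-1)\big)=A(n-1)\ge 2A>1$. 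Thus $\sum_{r=1}^{k}(r^{-s})^{n-1}=\sum_{r=1}^{k}r^{-s(n-1)}$ converges absolutely and its partial sums approach $\sum_{r=1}^{\infty}(r^{-s})^{n-1}$ as $k\to\infty$, producing the left-hand side asserted in the theorem.

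The core of the argument is the evaluation of $\lim_{k\to\infty}S_{n-i}$. The decisive step is to remove the bare index $k$ before passing to the limit: from the relation at $r=k$, namely $a+(k-1)d=k^{-s}$, one obtains $kd=k^{-s}+d-a$ and therefore $a+kd=k^{-s}+d$. Because $\operatorname{Re}(s)=A>1>0$ forces $k^{-s}\to 0$, these give $kd\to d-a$ and $a+kd\to d$. Substituting term by term into Theorem 1's $S_{n-i}$---using $k\,d^{\,n-i}=(kd)\,d^{\,n-i-1}\to(d-a)\,d^{\,n-i-1}$ in the first term, $(a+kd)^{2}\to d^{2}$ in the second, and $(a+kd)^{\,n-i}\to d^{\,n-i}$ in the third---turns it into precisely $\left(\frac{n-i}{2}-1\right)(d-a)d^{\,n-i-1}-\frac{n-i}{2}d^{\,n-i-2}(d^{2}-a^{2})+d^{\,n-i}-a^{\,n-i}$, the $S_{n-i}$ of the statement.

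I expect the term linear in $k$ to be the main obstacle. Taken at face value $k\,d^{\,n-i}$ diverges as $k\to\infty$, so the limit is finite only because the governing relation lets one rewrite $kd$ as $k^{-s}+d-a$ first; the whole identity rests on performing this substitution inside $S_{n-i}$ before the limit and on the sign condition $\operatorname{Re}(s)>0$ that sends $k^{-s}\to 0$. With the limits of both sides in hand, equating them while keeping the prefactor $\frac{d}{(n-1)!(n-3)!}$ intact delivers the claimed formula.
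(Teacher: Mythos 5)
Your proposal follows essentially the same route as the paper's own proof: invoke Theorem 1 for the progression $a+(r-1)d$, use the governing relation at $r=k$ to rewrite the bare $k$ in $S_{n-i}$ in terms of $k^{-s}$ (so that $kd\to d-a$ and $a+kd\to d$), and pass to the limit $k\to\infty$ using $k^{-s}\to 0$. The only cosmetic difference is that you impose $a+(r-1)d=r^{-s}$ from the outset, whereas the paper first works with $r^{s}$ and then formally replaces $s$ by $-s$; the resulting computation is identical.
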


\begin{proof}
For $k \in \mathbb{Z^{+}}$ and $r=1,2,3,\cdots,k$. Define $a+(r-1)d=r^s$.

\begin{equation*}
\Rightarrow r=\bigg( \frac{r^{s}-a}{d}\bigg)+1
\end{equation*}
From Theorem $(1)$, we have
\begin{equation*}
\frac{d}{(n-1)!(n-3)!}\sum_{r=1}^{k}(a+(r-1)d)^{n-1}=\sum_{i=0}^{n-3}\frac{1}{i!(n-i)!(n-3-i)!} \bigg(\frac{d}{2}\bigg)^i(-1)^iS_{n-i}
\end{equation*}
where 
\begin{equation*}
S_{n-i}
\end{equation*}
\begin{equation*}
=\bigg(\frac{n-i}{2}-1\bigg)kd^{n-i}-\frac{n-i}{2}d^{n-i-2}((a+kd)^2-a^2)+(a+kd)^{n-i}-a^{n-i}
\end{equation*}
This implies
\begin{equation*}
\frac{d}{(n-1)!(n-3)!}\sum_{r=1}^{k}(r^s)^{n-1}=\sum_{i=0}^{n-3}\frac{1}{i!(n-i)!(n-3-i)!} \bigg(\frac{d}{2}\bigg)^i(-1)^iS_{n-i}
\end{equation*}
where 
\begin{equation*}
S_{n-i}
\end{equation*}

\begin{equation*}
=\bigg(\frac{n-i}{2}-1\bigg)\bigg( \frac{k^{s}-a}{d}+1\bigg)d^{n-i}-\frac{n-i}{2}d^{n-i-2}\bigg(\bigg(a+\bigg( \frac{k^{s}-a}{d}+1\bigg)d\bigg)^2-a^2\bigg)
\end{equation*}

\begin{equation*}
+\bigg(a+\bigg( \frac{k^{s}-a}{d}+1\bigg)d\bigg)^{n-i}-a^{n-i}
\end{equation*}
Replacing $s$ by $-s$, then we have
\begin{equation*}
\frac{d}{(n-1)!(n-3)!}\sum_{r=1}^{k}(r^{-s})^{n-1}=\sum_{i=0}^{n-3}\frac{1}{i!(n-i)!(n-3-i)!} \bigg(\frac{d}{2}\bigg)^i(-1)^iS_{n-i}
\end{equation*}
where 
\begin{equation*}
S_{n-i}
\end{equation*}

\begin{equation*}
=\bigg(\frac{n-i}{2}-1\bigg)\bigg( \frac{k^{-s}-a}{d}+1\bigg)d^{n-i}-\frac{n-i}{2}d^{n-i-2}\bigg(\bigg(a+\bigg( \frac{k^{-s}-a}{d}+1\bigg)d\bigg)^2-a^2\bigg)
\end{equation*}

\begin{equation*}
+\bigg(a+\bigg( \frac{k^{-s}-a}{d}+1\bigg)d\bigg)^{n-i}-a^{n-i}
\end{equation*}
Now apply limit as $k\longrightarrow \infty$ both sides, then we have
\begin{equation*}
\frac{d}{(n-1)!(n-3)!}\sum_{r=1}^{\infty}(r^{-s})^{n-1}=\sum_{i=0}^{n-3}\frac{1}{i!(n-i)!(n-3-i)!} \bigg(\frac{d}{2}\bigg)^i(-1)^iS_{n-i}
\end{equation*}
where 
\begin{equation*}
S_{n-i}
\end{equation*}
\begin{equation*}
=\bigg(\frac{n-i}{2}-1\bigg)\bigg( \frac{-a}{d}+1\bigg)d^{n-i}-\frac{n-i}{2}d^{n-i-2}\bigg(\bigg(a+\bigg( \frac{-a}{d}+1\bigg)d\bigg)^2-a^2\bigg)
\end{equation*}

\begin{equation*}
+\bigg(a+\bigg( \frac{-a}{d}+1\bigg)d\bigg)^{n-i}-a^{n-i}
\end{equation*}
\begin{equation*}
=\bigg(\frac{n-i}{2}-1\bigg)\bigg(d-a\bigg)d^{n-i-1}-\frac{n-i}{2}d^{n-i-2}\bigg(d^2-a^2\bigg)+d^{n-i}-a^{n-i}
\end{equation*}

\end{proof}
\begin{theorem}If for all $x\in \mathbb{R}^{+}$ with $x^{s}=a+(x-1)d$ where $s=A^{1}+iB^{1}$ or $s=A^{2}+iB^{1}$ with $A^{1},A^{2}>0$ and $a=a_1+a_2i$, $d=d_1+d_2i \ne(0,0)$ are complex numbers, then 
\begin{equation}
\sum_{r=1}^{\infty}(-1)^{(r-1)}(r^{-s})^{n-1}=\frac{1}{nd}\bigg[\sum_{i=0}^{n-3}\binom {n-3} i \bigg(\frac{d}{2}\bigg)^i\frac{n!}{(n-i)!}(-1)^{(i+1)}L_{n-i}\bigg]
\end{equation}
where 
\begin{equation*}
L_{n-i}
\end{equation*}
\begin{equation}
=\bigg(\frac{n-i}{2}-1\bigg)(d-a)d^{n-i-1}-\left(\frac{n-i}{2}\right)d^{n-i-2}\left(d-a\right)^2
+\left(d-a\right)^{n-i}
\end{equation}
\end{theorem}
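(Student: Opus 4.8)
The plan is to mirror the proof of Theorem 6 verbatim, replacing the non-alternating identity of Theorem 1 with the alternating identity of Theorem 2. First I would fix $k \in \mathbb{Z}^{+}$ and, for $r = 1, 2, \ldots, k$, impose the defining relation $a + (r-1)d = r^{s}$, which rearranges to $r = \frac{r^{s} - a}{d} + 1$. Substituting $(a + (r-1)d)^{n-1} = (r^{s})^{n-1}$ into the conclusion of Theorem 2 turns its left-hand side into $\sum_{r=1}^{k}(-1)^{(r-1)}(r^{s})^{n-1}$, while on the right-hand side every occurrence of $k$ inside $L_{n-i}$ gets replaced by $\frac{k^{s} - a}{d} + 1$.

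Next I would replace $s$ by $-s$ throughout, so that the left-hand side becomes the target alternating series $\sum_{r=1}^{k}(-1)^{(r-1)}(r^{-s})^{n-1}$ and the parameter inside $L_{n-i}$ now reads $\frac{k^{-s} - a}{d} + 1$. Because $A^{1}, A^{2} > 0$, the hypothesis of Theorem 5 holds and $k^{-s} \to 0$ as $k \to \infty$; passing to the limit therefore sends the finite sum to $\sum_{r=1}^{\infty}(-1)^{(r-1)}(r^{-s})^{n-1}$ and replaces the parameter inside each $L_{n-i}$ by $\frac{-a}{d} + 1 = \frac{d-a}{d}$.

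The remaining work is the algebraic simplification of the three terms of $L_{n-i}$ under this limit. The crucial observation is that the translated argument telescopes to zero: $a + kd - d \to a + (d-a) - d = 0$. Hence the quadratic term collapses via $(a+kd-d)^{2} - (a-d)^{2} \to -(a-d)^{2} = -(d-a)^{2}$, giving $-\frac{n-i}{2}d^{n-i-2}(d-a)^{2}$; the linear term gives $\left(\frac{n-i}{2}-1\right)\frac{d-a}{d}\,d^{n-i} = \left(\frac{n-i}{2}-1\right)(d-a)d^{n-i-1}$; and the power term gives $(-1)^{(n-i-1)}\big[(a+kd-d)^{n-i} - (a-d)^{n-i}\big] \to (-1)^{(n-i)}(a-d)^{n-i} = (d-a)^{n-i}$. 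Collecting these three pieces reproduces the claimed closed form for $L_{n-i}$. I expect the sign bookkeeping in this last step—pairing the prefactor $(-1)^{(n-i-1)}$ with the minus sign left behind when $(a+kd-d)^{n-i}$ vanishes, and then absorbing it via $(-1)^{(n-i)}(a-d)^{n-i} = (-(a-d))^{n-i}$—to be the only place demanding genuine attention; every other step transfers mechanically from Theorem 6.
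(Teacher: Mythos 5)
Your proposal is correct and follows exactly the route the paper intends: the paper's own ``proof'' of this theorem is just the pointer ``See the proof of Theorem 7,'' and you have faithfully carried out that adaptation, using Theorem 2 in place of Theorem 1, substituting $k\mapsto \frac{k^{-s}-a}{d}+1$, letting $k^{-s}\to 0$, and simplifying $L_{n-i}$ with the correct sign bookkeeping in the $(-1)^{(n-i-1)}$ term. (Only a cosmetic slip: the non-alternating model you are mirroring is Theorem 7 and the limit fact is Theorem 6, not Theorems 6 and 5.)
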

\begin{proof}
See the proof of Theorem $7.$
\end{proof}
\begin{theorem}
For every given $1\ne k\ne m\in \mathbb{Z^{+}}$ and every given complex number $\gamma=\gamma_1+\gamma_2i$ then $a_1,a_2\in \mathbb{R}$,  $d_1,d_2\in \mathbb{R}|_{\{0\}}$ can be obtained from
\begin{equation}
d_1=\frac{\cos(\gamma_2\ln(k))k^{\gamma_1}-\cos(\gamma_2\ln(m))m^{\gamma_1}}{k-m}
\end{equation}
\begin{equation}
a_1=\cos(\gamma_2\ln(k))k^{\gamma_1}-(k-1)d_1
\end{equation}
\begin{equation}
d_2=\frac{\sin(\gamma_2\ln(k))k^{\gamma_1}-\sin(\gamma_2\ln(m))m^{\gamma_1}}{k-m}
\end{equation}
\begin{equation}
a_2=\sin(\gamma_2\ln(k))k^{\gamma_1}-(k-1)d_2
\end{equation}
\end{theorem}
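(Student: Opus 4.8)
The plan is to read this as the inverse of Theorem 3: instead of computing the exponent $s$ from given $a$ and $d$, I fix the target $s=\gamma=\gamma_1+i\gamma_2$ and recover the four real parameters $a_1,a_2,d_1,d_2$ by forcing the defining identity $x^{s}=a+(x-1)d$ to hold at the two distinct sample points $x=k$ and $x=m$. With $a=a_1+ia_2$ and $d=d_1+id_2$ unknown and $k\ne m$, these two complex equations supply exactly four real scalar constraints for the four real unknowns, so the system is determined.

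First I would write out the two constraints. Substituting $x=k$ and $x=m$ into $x^{\gamma}=a+(x-1)d$ gives
\begin{equation*}
a+(k-1)d=k^{\gamma},\qquad a+(m-1)d=m^{\gamma}.
\end{equation*}
Next I expand each right-hand side in Euler form: since $k^{\gamma}=e^{\gamma\ln k}=k^{\gamma_1}e^{i\gamma_2\ln k}=k^{\gamma_1}\bigl(\cos(\gamma_2\ln k)+i\sin(\gamma_2\ln k)\bigr)$, and similarly for $m^{\gamma}$, equating real parts and imaginary parts separately decouples the problem into two independent real $2\times2$ linear systems. The real-part system reads
\begin{equation*}
a_1+(k-1)d_1=k^{\gamma_1}\cos(\gamma_2\ln k),\qquad a_1+(m-1)d_1=m^{\gamma_1}\cos(\gamma_2\ln m),
\end{equation*}
with unknowns $(a_1,d_1)$, while the imaginary-part system is the same with every $\cos$ replaced by $\sin$ and unknowns $(a_2,d_2)$.

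Each system is then solved by elimination. Subtracting the two real-part equations cancels $a_1$ and gives $(k-m)d_1=k^{\gamma_1}\cos(\gamma_2\ln k)-m^{\gamma_1}\cos(\gamma_2\ln m)$; dividing by $k-m$, which is permissible precisely because $k\ne m$ makes the coefficient matrix nonsingular, yields the stated expression for $d_1$, and resubstituting into the first equation produces $a_1=k^{\gamma_1}\cos(\gamma_2\ln k)-(k-1)d_1$. Running the identical argument on the imaginary-part system delivers $d_2$ and $a_2$ in the claimed form.

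The step I would flag is the non-degeneracy bookkeeping rather than any real computation. The hypothesis $k\ne m$ is exactly what guarantees unique solvability of each $2\times2$ system, and the conditions $k,m\ne 1$ keep us in the admissible domain $x\ne 1$ of Theorem 3 so that the recovered $(a,d)$ genuinely corresponds to the exponent $\gamma$ through that theorem's $A^1,A^2,B^1$ formulas. One must also note that the required membership $d_1,d_2\in\mathbb{R}\setminus\{0\}$ holds generically, failing only on the exceptional locus where $k^{\gamma_1}\cos(\gamma_2\ln k)=m^{\gamma_1}\cos(\gamma_2\ln m)$ (respectively with $\sin$); away from that locus the recovered parameters are precisely the four displayed formulas, and nothing beyond elementary algebra is involved.
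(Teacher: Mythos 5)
Your proposal is correct and follows essentially the same route as the paper: both impose the interpolation condition at the two sample points $x=k$ and $x=m$ (you via $k^{\gamma}=a+(k-1)d$ and Euler's formula, the paper by unwinding $A^{1}=\gamma_1$, $B^{1}=\gamma_2$ through $\tan$ and $\ln$), arrive at the same four real equations $a_j+(k-1)d_j=k^{\gamma_1}\cos(\gamma_2\ln k)$ (resp.\ $\sin$), and solve the two decoupled $2\times2$ systems by subtraction and back-substitution. Your added remark that $d_1,d_2\ne 0$ only holds off an exceptional locus is a point the paper glosses over, but it does not change the argument.
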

\begin{proof}Suppose for $1\ne k\ne m\in \mathbb{Z^{+}}$,
\begin{equation*}
A^1(k,a_1,a_2,d_1,d_2)+iB^1(k,a_1,a_2,d_1,d_2)=A^1(m,a_1,a_2,d_1,d_2)+iB^1(m,a_1,a_2,d_1,d_2)
\end{equation*}
\begin{equation*}
\Rightarrow A^1(k,a_1,a_2,d_1,d_2)+iB^1(k,a_1,a_2,d_1,d_2)=\gamma=A^1(m,a_1,a_2,d_1,d_2)+iB^1(m,a_1,a_2,d_1,d_2)
\end{equation*}
\begin{equation*}
\Rightarrow A^1(k,a_1,a_2,d_1,d_2)+iB^1(k,a_1,a_2,d_1,d_2)=\gamma=\gamma_1+\gamma_2i
\end{equation*}
\begin{equation*}
\Rightarrow A^1(m,a_1,a_2,d_1,d_2)+iB^1(m,a_1,a_2,d_1,d_2)=\gamma=\gamma_1+\gamma_2i
\end{equation*}
Then we have the relation:
\begin{equation*}
A^1(k,a_1,a_2,d_1,d_2)=\gamma_1
\end{equation*}
\begin{equation*}
B^1(k,a_1,a_2,d_1,d_2)=\gamma_2
\end{equation*}
\begin{equation*}
A^1(m,a_1,a_2,d_1,d_2)=\gamma_1
\end{equation*}
\begin{equation*}
B^1(m,a_1,a_2,d_1,d_2)=\gamma_2
\end{equation*}
Now from these relations, we have:
\begin{equation*}
B^1(k,a_1,a_2,d_1,d_2)=\gamma_2\Rightarrow \frac{1}{\ln(k)}\arctan\bigg(\frac{a_2+(k-1)d_2}{a_1+(k-1)d_1}\bigg)=\gamma_2
\end{equation*}

\begin{equation}
\Rightarrow \frac{a_2+(k-1)d_2}{a_1+(k-1)d_1}=\tan(\gamma_2\ln(k))
\end{equation}
\begin{equation*}
B^1(m,a_1,a_2,d_1,d_2)=\gamma_2\Rightarrow \frac{1}{\ln(m)}\arctan\bigg(\frac{a_2+(m-1)d_2}{a_1+(m-1)d_1}\bigg)=\gamma_2
\end{equation*}
\begin{equation}
\Rightarrow \frac{a_2+(m-1)d_2}{a_1+(m-1)d_1}=\tan(\gamma_2\ln(m))
\end{equation}

\begin{equation*}
A^1(k,a_1,a_2,d_1,d_2)=\gamma_1\Rightarrow \frac{1}{\ln(k)}\ln\bigg(\frac{a_1+(k-1)d_1}{\cos(B^{1}\ln(k))}\bigg)=\gamma_1
\end{equation*}
\begin{equation}
\Rightarrow a_1+(k-1)d_1=\cos(\gamma_2\ln(k))k^{\gamma_1}
\end{equation}

\begin{equation*}
A^1(m,a_1,a_2,d_1,d_2)=\gamma_1\Rightarrow \frac{1}{\ln(k)}\ln\bigg(\frac{a_1+(m-1)d_1}{\cos(B^{1}\ln(m))}\bigg)=\gamma_1
\end{equation*}

\begin{equation}
\Rightarrow a_1+(m-1)d_1=\cos(\gamma_2\ln(m))m^{\gamma_1}
\end{equation}
From equation $\color{blue}(22)$ and $\color{blue}(24)$, we have

\begin{equation*}
 \frac{a_2+(k-1)d_2}{\cos(\gamma_2\ln(k))k^{\gamma_1}}=\tan(\gamma_2\ln(k))
\end{equation*}
\begin{equation*}
\Rightarrow a_2+(k-1)d_2=\tan(\gamma_2\ln(k))\cos(\gamma_2\ln(k))k^{\gamma_1}=\sin(\gamma_2\ln(k))k^{\gamma_1}
\end{equation*}

\begin{equation}
\Rightarrow a_2+(k-1)d_2=\sin(\gamma_2\ln(k))k^{\gamma_1}
\end{equation}
similarly 
\begin{equation}
a_2+(m-1)d_2=\sin(\gamma_2\ln(m))m^{\gamma_1}
\end{equation}
From equations $\color{blue}(26)$ and $\color{blue}(27)$, we get
\begin{equation}
d_2=\frac{\sin(\gamma_2\ln(k))k^{\gamma_1}-\sin(\gamma_2\ln(m))m^{\gamma_1}}{k-m}
\end{equation}
And then from equation $\color{blue}(26)$ or $\color{blue}(27)$, we get
\begin{equation}
a_2=\sin(\gamma_2\ln(k))k^{\gamma_1}-(k-1)d_2
\end{equation}

From equation $\color{blue}(24)$ and equation $\color{blue}(25)$, we have
\begin{equation}
d_1=\frac{\cos(\gamma_2\ln(k))k^{\gamma_1}-\cos(\gamma_2\ln(m))m^{\gamma_1}}{k-m}
\end{equation}
and then from equation $\color{blue}(24)$ or $\color{blue}(25)$, we have
\begin{equation}
a_1=\cos(\gamma_2\ln(k))k^{\gamma_1}-(k-1)d_1
\end{equation}
\end{proof}
\begin{theorem}
{\bf (Closed-form formula of $\zeta(Z)$ with real part of $Z$ greater than one)}\\
Define the given complex number, $Z$ with real part of $Z$ greater than one, for real numbers $x_1,x_2$ and $n=3,4,5\cdots$ by:
\begin{equation*}
Z=(n-1)x_1+(n-1)x_2i, \text{ where }i=\sqrt{-1}
\end{equation*}that is $\gamma_1=(n-1)x_1,\gamma_2=(n-2)x_2$, then
for every given $1\ne k\ne m\in \mathbb{Z^{+}}$ and for all $a_1,a_2\in \mathbb{R}$,  $d_1,d_2\in \mathbb{R}|_{\{0\}}$ such that   
\begin{equation*}
d_1=\frac{\cos(\gamma_2\ln(k))k^{\gamma_1}-\cos(\gamma_2\ln(m))m^{\gamma_1}}{k-m}
\end{equation*}And
\begin{equation*}
a_1=\cos(\gamma_2\ln(k))k^{\gamma_1}-(k-1)d_1
\end{equation*}
\begin{equation*}
d_2=\frac{\sin(\gamma_2\ln(k))k^{\gamma_1}-\sin(\gamma_2\ln(m))m^{\gamma_1}}{k-m}
\end{equation*}
And therefore
\begin{equation*}
a_2=\sin(\gamma_2\ln(k))k^{\gamma_1}-(k-1)d_2
\end{equation*}then  a closed form of Riemann zeta function, $\zeta(Z)$ is given by
\begin{equation*}
\zeta(Z)=\sum_{r=1}^{\infty}\frac{1}{r^Z}
\end{equation*}

\begin{equation}
=\frac{(n-1)!(n-3)!}{d}\sum_{i=0}^{n-3}\frac{1}{i!(n-i)!(n-3-i)!} \bigg(\frac{d}{2}\bigg)^i(-1)^iS_{n-i}
\end{equation}
Where 
\begin{equation*}
S_{n-i}
\end{equation*}
\begin{equation}
=\left(\frac{n-i}{2}-1\right)\left(d-a\right)d^{n-i-1}-\frac{n-i}{2}d^{n-i-2}\left(d^2-a^2\right)+d^{n-i}-a^{n-i}
\end{equation}

\end{theorem}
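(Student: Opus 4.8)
The plan is to obtain this as an immediate rearrangement of Theorem 7, with Theorem 8 supplying the explicit parameters. First I would recall the infinite-sum identity of Theorem 7, which already carries the simplified $S_{n-i}$ appearing in the present statement:
\begin{equation*}
\frac{d}{(n-1)!(n-3)!}\sum_{r=1}^{\infty}(r^{-s})^{n-1}=\sum_{i=0}^{n-3}\frac{1}{i!(n-i)!(n-3-i)!}\bigg(\frac{d}{2}\bigg)^i(-1)^iS_{n-i},
\end{equation*}
valid whenever $s=A^1+iB^1$ (or $s=A^2+iB^1$) with $A^1,A^2>1$ and $x^s=a+(x-1)d$. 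The one algebraic step that drives everything is the identity $(r^{-s})^{n-1}=r^{-(n-1)s}$; setting $Z:=(n-1)s$ converts the left-hand series into $\sum_{r=1}^{\infty}r^{-Z}=\zeta(Z)$ by Definition 2. Multiplying the displayed identity through by $(n-1)!(n-3)!/d$ then reproduces the asserted closed form verbatim.

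Second, I would check that $Z$ lands in the stated domain. With $s=x_1+ix_2$, where $x_1=A^1$ and $x_2=B^1$, the definition $Z=(n-1)s$ gives $\mathrm{Re}(Z)=(n-1)A^1$; since $A^1>1$ by Theorem 7 and $n\ge 3$ forces $n-1\ge 2$, we get $\mathrm{Re}(Z)>2>1$, so $\zeta(Z)$ converges absolutely and the identification with the series is legitimate. To realize a prescribed exponent I would invoke Theorem 8: given the target $\gamma$ and two distinct indices $1\ne k\ne m$, it returns the explicit $a_1,a_2,d_1,d_2$ displayed in the statement, and hence $a=a_1+a_2 i$ and $d=d_1+d_2 i$, closing the loop between $Z$ and the coefficients of the formula.

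The step I expect to be the main obstacle is the conceptual one of treating a \emph{single} exponent $s$ as governing every term of the series. In the construction $s=A^1+iB^1$ is not free but is defined through $a+(x-1)d=x^s$, and for fixed $a,d$ an exponential in $x$ cannot agree with the linear function $a+(x-1)d$ at every $x$; so a priori the exponent varies with the summation index. Making the collapse $\sum_r (r^{-s})^{n-1}=\zeta(Z)$ legitimate therefore rests entirely on pinning the exponent to the common value at the indices used (the content of Theorem 8) and on the limit $k\to\infty$ of Theorem 7, whose validity in turn leans on $k^{-s}\to 0$ from Theorem 5. I would scrutinize precisely this interchange of the finite-sum algebra with the $k\to\infty$ limit before declaring the closed form established.
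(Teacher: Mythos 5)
Your route is the same as the paper's: the paper's entire proof of this theorem is the single sentence ``Follows from Theorem 9, by letting $\gamma=(n-1)Z$,'' and your reconstruction --- Theorem 7's infinite-sum identity, the rescaling $(r^{-s})^{n-1}=r^{-(n-1)s}$ with $Z=(n-1)s$, and the parameter-solving theorem (Theorem 9 in the paper's numbering, not Theorem 8) to manufacture $a_1,a_2,d_1,d_2$ --- is the only coherent way to unpack that sentence. Note, though, that even the bookkeeping does not close: the statement sets $\gamma_1=(n-1)x_1$, i.e.\ $\gamma=Z$, the paper's proof says $\gamma=(n-1)Z$, and your reading requires $\gamma=s=Z/(n-1)$; these are mutually inconsistent, and under the statement's own normalization the series produced would be $\zeta((n-1)Z)$ rather than $\zeta(Z)$.

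More importantly, the ``main obstacle'' you defer to the end is not a technicality to be scrutinized later; it is a genuine gap that neither you nor the paper closes, and it cannot be closed along this route. Theorem 9 chooses $a,d$ so that $a+(r-1)d=r^{\gamma}$ holds at exactly the two indices $r=k$ and $r=m$; since $r\mapsto a+(r-1)d$ is affine and $r\mapsto r^{\gamma}$ is not, the identity fails at every other index, so there is no single exponent $s$ for which $\sum_{r}(a+(r-1)d)^{n-1}$ equals $\sum_{r}(r^{s})^{n-1}$, and the term-by-term substitution in the proof of Theorem 7 is false. The subsequent replacement of $s$ by $-s$ followed by $k\to\infty$ compounds this: the partial sums $\sum_{r=1}^{k}(a+(r-1)d)^{n-1}$ grow without bound in $k$, whereas the claimed limit $\sum_{r=1}^{\infty}(r^{-s})^{n-1}$ is finite, so the interchange you said you would scrutinize does in fact fail. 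A quick sanity check confirms the conclusion: the right-hand side of equation (32) depends on the arbitrary choice of $k$ and $m$ through $a$ and $d$, while $\zeta(Z)$ does not, so the asserted equality cannot hold. Your instinct to isolate precisely this step was correct; the proof cannot be completed.
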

\begin{proof}
Follows from Theorem$(9)$, by letting $\gamma=(n-1)Z$.
\end{proof}
\begin{theorem}
{\bf (Closed-form formula of $\eta(Z)$ with real part of $Z$ greater than zero)}\\
Define the given complex number, $Z$ with real part of $Z$ greater than zero, for real numbers $x_1,x_2$ and $n=3,4,5\cdots$ by:
\begin{equation*}
Z=(n-1)x_1+(n-1)x_2i, \text{ where }i=\sqrt{-1}
\end{equation*}that is $\gamma_1=(n-1)x_1,\gamma_2=(n-2)x_2$, then
for every given $1\ne k\ne m\in \mathbb{Z^{+}}$ and for all $a_1,a_2\in \mathbb{R}$,  $d_1,d_2\in \mathbb{R}|_{\{0\}}$ such that   
\begin{equation*}
d_1=\frac{\cos(\gamma_2\ln(k))k^{\gamma_1}-\cos(\gamma_2\ln(m))m^{\gamma_1}}{k-m}
\end{equation*}And
\begin{equation*}
a_1=\cos(\gamma_2\ln(k))k^{\gamma_1}-(k-1)d_1
\end{equation*}
\begin{equation*}
d_2=\frac{\sin(\gamma_2\ln(k))k^{\gamma_1}-\sin(\gamma_2\ln(m))m^{\gamma_1}}{k-m}
\end{equation*}
And therefore
\begin{equation*}
a_2=\sin(\gamma_2\ln(k))k^{\gamma_1}-(k-1)d_2
\end{equation*}then a closed form of eta function, $\eta(Z)$ is given by
\begin{equation*}
\eta(Z)=\sum_{r=1}^{\infty}(-1)^{r-1}\frac{1}{r^Z}
\end{equation*}

\begin{equation*}
=\frac{(n-1)!(n-3)!}{d}\sum_{i=0}^{n-3}\frac{1}{i!(n-i)!(n-3-i)!} \bigg(\frac{d}{2}\bigg)^i(-1)^iL_{n-i}
\end{equation*}
Where 
\begin{equation*}
L_{n-i}
\end{equation*}
\begin{equation*}
=\bigg(\frac{n-i}{2}-1\bigg)(d-a)d^{n-i-1}-\left(\frac{n-i}{2}\right)d^{n-i-2}\left(d-a\right)^2
+\left(d-a\right)^{n-i}
\end{equation*}
\end{theorem}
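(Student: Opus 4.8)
The plan is to establish this closed-form formula for $\eta(Z)$ in complete parallel with the formula for $\zeta(Z)$ in Theorem 10, the only change being that the ordinary power-sum identity (Theorem 7, built on Theorem 1) is replaced by its alternating counterpart (Theorem 8, built on Theorem 2). The guiding observation is that, for an exponent $s$ with $Z=(n-1)s$, one has $r^{-Z}=(r^{-s})^{n-1}$, so the defining series of the eta function becomes
\[
\eta(Z)=\sum_{r=1}^{\infty}(-1)^{r-1}r^{-Z}=\sum_{r=1}^{\infty}(-1)^{r-1}(r^{-s})^{n-1}.
\]
This is exactly the left-hand side evaluated in Theorem 8, so most of the work is done once $s=A^{1}+iB^{1}$ (or $s=A^{2}+iB^{1}$) is matched to the prescribed $Z$.

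First I would fix $1\ne k\ne m\in\mathbb{Z}^{+}$ and apply Theorem 9 with $\gamma$ chosen as in the hypotheses, which manufactures real numbers $a_1,a_2,d_1,d_2$ — hence $a=a_1+a_2i$ and $d=d_1+d_2i\ne(0,0)$ — realizing the required exponent through $a+(r-1)d=r^{s}$. Since $\mathrm{Re}(Z)>0$, the condition $A^{1},A^{2}>0$ demanded by Theorem 8 holds, so the alternating series converges and the identity applies. The displayed formulas for $a_1,a_2,d_1,d_2$ in the statement are then literally the output of Theorem 9.

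Next I would evaluate the finite alternating sum. Using $a+(r-1)d=r^{s}$ to write $r=(r^{s}-a)/d+1$, Theorem 8 expresses $\sum_{r=1}^{k}(-1)^{r-1}(r^{s})^{n-1}$ through $L_{n-i}$, in which the data of the upper limit enter only via the endpoint quantity $k^{s}$ and the term count $k$. Replacing $s$ by $-s$ and letting $k\to\infty$, Theorem 6 gives $k^{-s}\to 0$, so the endpoint quantity $a+kd-d=k^{-s}$ collapses to $0$, while the rewritten multiplicity $k=(k^{-s}-a)/d+1$ tends to $(d-a)/d$. These two limits turn the $k$-dependent $L_{n-i}$ of Theorem 8 into the stated $k$-free expression in $d-a$ and $d$. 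Finally, expanding $\binom{n-3}{i}=\frac{(n-3)!}{i!(n-3-i)!}$ and using $\frac{n!}{n}=(n-1)!$ converts the prefactor $\frac{1}{nd}$ of Theorem 8 into $\frac{(n-1)!(n-3)!}{d}$, reproducing the claimed formula.

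The hard part will be justifying the passage to the limit $k\to\infty$ inside the finite-sum identity. Theorem 6 supplies only the termwise decay $k^{-s}\to 0$; to conclude that the entire closed form survives the limit one must verify that $L_{n-i}$ depends continuously on the quantities $k^{-s}$ and $k$ and, more importantly, that $\sum(-1)^{r-1}(r^{-s})^{n-1}$ actually converges (conditionally) for $\mathrm{Re}(Z)>0$, so that the limit of the partial-sum identities is the identity for the infinite series. A secondary bookkeeping point is the sign: Theorem 8 carries $(-1)^{i+1}$ whereas the target displays $(-1)^{i}$, so I would track this factor through the simplification to confirm the convention adopted in the final formula.
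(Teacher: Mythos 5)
Your proposal follows essentially the same route as the paper, which gives no independent argument for this theorem at all (its proof is the single line ``See the proof of Theorem-(10),'' which in turn just invokes Theorem 9); you have simply reconstructed the intended chain Theorem 9 $\to$ Theorem 8 $\to$ substitution $r=(r^{s}-a)/d+1$ $\to$ replace $s$ by $-s$ $\to$ limit $k\to\infty$ via Theorem 6, which is exactly the template the paper uses in its proof of Theorem 7. The two difficulties you flag are real defects of the paper itself rather than of your reconstruction: the passage to the limit inside the finite-sum identity is never justified there either, and the factor $(-1)^{i+1}$ in Theorem 8 genuinely does not match the $(-1)^{i}$ in the stated closed form, so the formula as printed is off by an overall sign relative to what the cited theorems actually yield.
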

\begin{proof}
See the proof of Theorem-$(10)$.
\end{proof}
\begin{theorem}{\bf(Closed-form formula of $\zeta(s)$ with real part of $s$ greater than zero)}
Closed-form formula of Riemann zeta function, $\zeta(s)$ with real part of a complex number $s$ greater than zero can be obtained from the relation
\begin{equation*}
\zeta(s)=\frac{1}{1-2^{(1-s)}}\eta(s)
\end{equation*}
This relation tells us that the left hand equation, $\zeta(s)$ can be obtained from the right hand side equation, $\eta(s)$ with real part of $s$ greater than zero.
\end{theorem}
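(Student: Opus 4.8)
The plan is to derive the asserted identity directly from the series definitions of $\zeta(s)$ and $\eta(s)$ given in Definitions~2 and~3, and then to extend it by analytic continuation. First I would fix $s$ with $\mathrm{Re}(s)>1$, where the series $\sum_{n=1}^{\infty}n^{-s}$ converges absolutely and may therefore be rearranged and regrouped freely. The guiding observation is that $\eta(s)$ and $\zeta(s)$ have identical odd-indexed terms and opposite-signed even-indexed terms, so their difference isolates the even part of the zeta series.

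Carrying this out, I would split $\zeta(s)=\sum_{n\text{ odd}}n^{-s}+\sum_{n\text{ even}}n^{-s}$ and write $\eta(s)=\sum_{n\text{ odd}}n^{-s}-\sum_{n\text{ even}}n^{-s}$, so that $\zeta(s)-\eta(s)=2\sum_{n\text{ even}}n^{-s}$. Factoring the even-indexed sum as $\sum_{m=1}^{\infty}(2m)^{-s}=2^{-s}\zeta(s)$ gives $\zeta(s)-\eta(s)=2^{\,1-s}\zeta(s)$, hence $\eta(s)=\bigl(1-2^{\,1-s}\bigr)\zeta(s)$, and dividing by $1-2^{\,1-s}$ yields the claimed formula $\zeta(s)=\bigl(1-2^{\,1-s}\bigr)^{-1}\eta(s)$. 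This step is routine once absolute convergence licenses the regrouping.

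The substantive part of the argument, and the step I expect to be the main obstacle, is passing from $\mathrm{Re}(s)>1$ to the full half-plane $\mathrm{Re}(s)>0$ asserted in the statement. Here the original zeta series no longer converges, so the termwise manipulation above is not available; instead I would argue by analytic continuation. The alternating series defining $\eta(s)$ converges (conditionally, via Abel/Dirichlet summation) for every $s$ with $\mathrm{Re}(s)>0$ and defines an analytic function there, so the right-hand side $\bigl(1-2^{\,1-s}\bigr)^{-1}\eta(s)$ is analytic on $\mathrm{Re}(s)>0$ away from the zeros of $1-2^{\,1-s}$. Since this expression agrees with $\zeta(s)$ on the open set $\mathrm{Re}(s)>1$, the identity theorem forces the two analytic functions to coincide on the connected overlap of their domains, establishing the relation throughout $\mathrm{Re}(s)>0$.

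To finish cleanly I would record the behaviour at the zeros of the denominator. The factor $1-2^{\,1-s}$ vanishes exactly when $2^{\,1-s}=1$, that is at $s=1+\tfrac{2\pi i k}{\ln 2}$ for $k\in\mathbb{Z}$; at $s=1$ this reproduces the genuine simple pole of $\zeta$, whereas at the remaining such points $\eta$ also vanishes and the singularity is removable. Noting this confirms that $\bigl(1-2^{\,1-s}\bigr)^{-1}\eta(s)$ is exactly the analytic continuation of $\zeta$ to $\mathrm{Re}(s)>0$, which is what the statement asserts. The overall argument is short; the only real subtlety is making the analytic-continuation step rigorous rather than treating the formal series manipulation as if it held termwise for $\mathrm{Re}(s)\le 1$.
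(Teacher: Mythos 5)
Your proposal is correct, and it is in fact a genuine proof of the identity, which is more than the paper itself offers: the paper's entire ``proof'' of this theorem is the sentence ``See the proof of Theorem-10 and Theorem-11,'' i.e.\ it simply points back to the closed-form expressions for $\zeta(Z)$ and $\eta(Z)$ derived from the sums-of-powers identities, and takes the relation $\zeta(s)=(1-2^{1-s})^{-1}\eta(s)$ for granted as the standard extension formula already quoted in the introduction. Those two theorems say nothing about why $\zeta$ and $\eta$ are related by the factor $1-2^{1-s}$, so the paper never actually establishes the statement. Your route --- splitting $\zeta(s)$ into odd- and even-indexed terms for $\mathrm{Re}(s)>1$ to get $\zeta(s)-\eta(s)=2^{1-s}\zeta(s)$, then extending to $\mathrm{Re}(s)>0$ by the identity theorem using the conditional convergence of the alternating series, and finally checking that the zeros of $1-2^{1-s}$ at $s=1+2\pi ik/\ln 2$ are either the genuine pole ($k=0$) or removable --- is the standard and fully rigorous argument, and it correctly identifies the analytic-continuation step as the only point requiring care. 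What your approach buys is an actual justification of the theorem; what the paper's approach buys is nothing beyond a cross-reference to results that do not bear on the claim.
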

\begin{proof}
See the proof of Theorem-10 and Theorem-11.
\end{proof}
\begin{theorem}{\bf(Closed-form formula of $\zeta(s)$ with real part of $s$ less than zero)}
Closed-form formula of Riemann zeta function, $\zeta(s)$ with real part of a complex number $s$ less than zero can be obtained from the relation
\begin{equation*}
\zeta(s)=\Gamma(1-s)(2\pi)^{s-1}2\sin\left( \frac{\pi s}{2}\right)\zeta(1-s)
\end{equation*}
This relation tells us that from the right hand side equation, $\zeta(1-s)$ can be obtained from the left hand side equation, $\zeta(s)$ with real part of $s$ greater than one.
\end{theorem}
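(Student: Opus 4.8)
The plan is to treat the functional equation itself as a known, externally supplied identity --- it is already stated in the Introduction and recorded in \cite{[3]},\cite{[5]},\cite{[11]},\cite{[12]} --- and to use it only as a transport mechanism that carries a value of $\zeta$ from the half-plane $\mathrm{Re}(s)>1$, where Theorem $10$ supplies a closed form, into the half-plane $\mathrm{Re}(s)<0$. Accordingly I would not re-derive the functional equation; I would invoke it and verify that every factor appearing on its right-hand side is well defined and computable under the hypothesis $\mathrm{Re}(s)<0$.

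First I would record the elementary but crucial observation that the reflection $s\mapsto 1-s$ sends the open half-plane $\{\mathrm{Re}(s)<0\}$ onto the open half-plane $\{\mathrm{Re}(1-s)>1\}$, since $\mathrm{Re}(1-s)=1-\mathrm{Re}(s)>1$ precisely when $\mathrm{Re}(s)<0$. This is what licenses the application of Theorem $10$ to the argument $1-s$: writing $Z=1-s$ so that $\mathrm{Re}(Z)>1$, and setting $\gamma=(n-1)Z=(n-1)(1-s)$ with $a,d$ built from $k,m,\gamma$ through Theorem $9$, the closed-form expression
\begin{equation*}
\zeta(Z)=\frac{(n-1)!(n-3)!}{d}\sum_{i=0}^{n-3}\frac{1}{i!(n-i)!(n-3-i)!}\bigg(\frac{d}{2}\bigg)^i(-1)^iS_{n-i}
\end{equation*}
is available for $\zeta(1-s)$.

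Second I would check that the remaining factors $\Gamma(1-s)$, $(2\pi)^{s-1}$ and $2\sin(\pi s/2)$ are unproblematic on $\{\mathrm{Re}(s)<0\}$. Since $\mathrm{Re}(1-s)>1>0$, the gamma factor $\Gamma(1-s)$ is given directly by the convergent integral of Definition $4$; the exponential $(2\pi)^{s-1}$ and the sine are entire, hence defined everywhere. Substituting the Theorem $10$ expression for $\zeta(1-s)$ into the functional equation then yields the desired closed form
\begin{equation*}
\zeta(s)=\Gamma(1-s)(2\pi)^{s-1}2\sin\!\left(\frac{\pi s}{2}\right)\zeta(1-s),
\end{equation*}
with $\zeta(1-s)$ now replaced by its Theorem $10$ value, which is the content of the statement.

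The step I expect to be the real obstacle is the one the statement quietly presupposes: that the closed form of Theorem $10$ actually reproduces the analytic value of $\zeta(1-s)$ \emph{independently} of the auxiliary data $n,k,m,x_1,x_2$ used to manufacture $a$ and $d$. Unless one first establishes that this expression is genuinely parameter-free on $\{\mathrm{Re}(Z)>1\}$ --- that is, that every admissible choice collapses to the single Dirichlet-series value $\sum_{r\ge 1}r^{-Z}$ --- the substitution into the functional equation cannot be guaranteed to produce the true value of $\zeta(s)$. I would therefore devote the bulk of the work to this consistency check, and only afterwards read off the functional-equation formula; the multiplication of the three analytic factors by the (now justified) closed form is then purely formal.
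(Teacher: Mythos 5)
Your proposal follows the same route the paper intends: the paper's entire ``proof'' of this theorem is the single line ``See the proof of Theorem-10,'' i.e.\ take the functional equation as given and feed it the closed form of Theorem 10 evaluated at $1-s$. Your observation that $\mathrm{Re}(s)<0$ forces $\mathrm{Re}(1-s)>1$, so that Theorem 10 applies to the reflected argument and $\Gamma(1-s)$ is given by the convergent integral of Definition 4, is exactly the bookkeeping the paper omits, and that part of your argument is fine.

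However, the consistency check you flag at the end is not a deferrable technicality --- it is the entire content of the theorem, and it cannot be discharged within this paper. Theorem 10 rests on the identification $a+(r-1)d=r^{Z}$ holding simultaneously for all positive integers $r$ with a single fixed pair $(a,d)$, which is impossible unless the data are degenerate; Theorem 9 only enforces the interpolation at the two chosen integers $k$ and $m$, so the resulting ``closed form'' depends on the auxiliary parameters $n,k,m$. The paper itself concedes this in Conclusion 1, where it asserts that the same $s$ yields different values of $\zeta(s)$ (including $0$ and non-$0$ simultaneously). Once the quantity being substituted for $\zeta(1-s)$ is not a well-defined function of $1-s$, multiplying it by $\Gamma(1-s)(2\pi)^{s-1}2\sin(\pi s/2)$ cannot produce a well-defined value of $\zeta(s)$ either. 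So the step you correctly identify as ``the real obstacle'' is where any honest write-up of this proof fails, and no argument in the paper repairs it.
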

\begin{proof}
See the proof of Theorem-10. 
\end{proof}
\begin{theorem}
If $a=d$, then $S_{n-i}=0$, which is defined in Theorem $7$, equation $\color{blue}(15)$. Hence
\begin{equation*}
\sum_{r=1}^{\infty}(r^{-s})^{n-1}=0
\end{equation*}
Where the complex number $s=\gamma_1+i\gamma_2$ with $\gamma_1>1$. Define $n=2,3,\cdots$. For every given $1\ne k\ne m\in \mathbb{Z^{+}}$ and for all $t\in \mathbb{Z}$,
\begin{equation*}
\gamma_2=\frac{t\pi}{\ln(k)-\ln(m)}
\end{equation*}and
\begin{equation*}
\gamma_1=1+\left[ \frac{1}{\ln(k)-\ln(m)}\right]\left[\ln\left( \frac{\cos(\gamma_2\ln(m))}{\cos(\gamma_2\ln(k))}\right)\right]
\end{equation*}
\end{theorem}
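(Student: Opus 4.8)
The plan is to separate the claim into two essentially independent pieces: the purely algebraic vanishing statement, and the determination of $\gamma_1,\gamma_2$ through the parametrization of Theorem $9$. For the first piece I would substitute $a=d$ directly into the expression for $S_{n-i}$ in equation $(15)$. The factor $d-a$ kills the first term, the factor $d^2-a^2=(d-a)(d+a)$ kills the second, and $d^{n-i}-a^{n-i}$ kills the third, so $S_{n-i}=0$ for every admissible index $i$ (the case $n=2$ being the empty sum, hence trivial). Feeding this into the identity of Theorem $7$, the entire right-hand side of $(14)$ collapses to zero, leaving
\begin{equation*}
\frac{d}{(n-1)!(n-3)!}\sum_{r=1}^{\infty}(r^{-s})^{n-1}=0.
\end{equation*}
Since $d=d_1+d_2i\neq(0,0)$ and the factorials are nonzero, I may cancel $d$ and conclude $\sum_{r=1}^{\infty}(r^{-s})^{n-1}=0$, which is the displayed identity.

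For the second piece I would translate the complex equation $a=d$ into the two real equations $a_1=d_1$ and $a_2=d_2$, then eliminate $a_1,a_2,d_1,d_2$ using the closed forms $(18)$--$(21)$ of Theorem $9$. Inserting $a_1=d_1$ into $a_1=\cos(\gamma_2\ln(k))k^{\gamma_1}-(k-1)d_1$ gives $k\,d_1=\cos(\gamma_2\ln(k))k^{\gamma_1}$, i.e. $d_1=\cos(\gamma_2\ln(k))k^{\gamma_1-1}$. Comparing this with the defining formula $(18)$ for $d_1$ and clearing the denominator $k-m$ reduces everything to the single relation $\cos(\gamma_2\ln(k))\,k^{\gamma_1-1}=\cos(\gamma_2\ln(m))\,m^{\gamma_1-1}$. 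Taking logarithms then yields exactly
\begin{equation*}
\gamma_1=1+\frac{1}{\ln(k)-\ln(m)}\ln\!\left(\frac{\cos(\gamma_2\ln(m))}{\cos(\gamma_2\ln(k))}\right).
\end{equation*}

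Running the identical computation with the sine formulas $(20)$--$(21)$ produces the companion relation $\sin(\gamma_2\ln(k))\,k^{\gamma_1-1}=\sin(\gamma_2\ln(m))\,m^{\gamma_1-1}$. Dividing the sine relation by the cosine relation cancels the common factor $(k/m)^{\gamma_1-1}$ and gives $\tan(\gamma_2\ln(k))=\tan(\gamma_2\ln(m))$; since the tangent has period $\pi$, this forces $\gamma_2(\ln(k)-\ln(m))=t\pi$ for some $t\in\mathbb{Z}$, that is,
\begin{equation*}
\gamma_2=\frac{t\pi}{\ln(k)-\ln(m)}.
\end{equation*}

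I expect the main obstacle to be the \emph{consistency} of the two constraints rather than either extraction taken in isolation. Imposing $a=d$ is two real conditions on the two free parameters $(\gamma_1,\gamma_2)$, so before asserting a genuine solution I would have to confirm that the value of $\gamma_2$ read off from the ratio is compatible with the cosine relation used to define $\gamma_1$. The delicate point is that once $\gamma_2(\ln(k)-\ln(m))=t\pi$ is substituted, one has $\gamma_2\ln(k)=\gamma_2\ln(m)+t\pi$, so the argument $\cos(\gamma_2\ln(m))/\cos(\gamma_2\ln(k))$ reduces to $(-1)^{t}$; keeping the logarithm real and reconciling the outcome with the standing hypothesis $\gamma_1>1$ is therefore exactly the step that requires care, alongside excluding the degenerate choices of $k,m,t$ that make a cosine vanish. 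The two closed-form extractions themselves are routine logarithmic algebra.
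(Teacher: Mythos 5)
Your proposal follows essentially the same route as the paper: translate $a=d$ into $a_1=d_1$ and $a_2=d_2$, feed these into the closed forms of Theorem 9 to obtain the two ratio identities $\frac{m^{1-\gamma_1}}{k^{1-\gamma_1}}=\frac{\cos(\gamma_2\ln(m))}{\cos(\gamma_2\ln(k))}=\frac{\sin(\gamma_2\ln(m))}{\sin(\gamma_2\ln(k))}$, equate them to force $\gamma_2(\ln(k)-\ln(m))=t\pi$, and then solve the cosine relation for $\gamma_1$ (your use of the periodicity of the tangent in place of the paper's $\sin(\gamma_2\ln(k)-\gamma_2\ln(m))=0$ is an immaterial variation). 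The consistency worry you raise at the end is genuine and is not addressed in the paper's proof: substituting $\gamma_2\ln(k)=\gamma_2\ln(m)+t\pi$ makes the ratio $\cos(\gamma_2\ln(m))/\cos(\gamma_2\ln(k))$ equal to $(-1)^t$, so the logarithm is real only for even $t$ and then yields $\gamma_1=1$, which conflicts with the standing hypothesis $\gamma_1>1$.
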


\begin{proof}
\begin{equation*}
a=d\Longrightarrow a_1=d_1,a_2=d_2
\end{equation*}If
\begin{equation*}
a_1=d_1
\end{equation*}
Then from equation $\color{blue}(30)$ and equation $\color{blue}(31)$, we have
\begin{equation*}
d_1=\frac{\cos(\gamma_2\ln(k))k^{\gamma_1}-\cos(\gamma_2\ln(m))m^{\gamma_1}}{k-m}
\end{equation*}And
\begin{equation*}
a_1=\cos(\gamma_2\ln(k))k^{\gamma_1}-(k-1)d_1
\end{equation*}
\begin{equation*}
\Rightarrow a_1=\frac{\cos(\gamma_2\ln(k))k^{\gamma_1}}{k}
\end{equation*}
Equate this with equation $\color{blue}(30)$, that is
\begin{equation*}
\frac{\cos(\gamma_2\ln(k))k^{\gamma_1}}{k}=\frac{\cos(\gamma_2\ln(k))k^{\gamma_1}-\cos(\gamma_2\ln(m))m^{\gamma_1}}{k-m}
\end{equation*}
\begin{equation}
\Rightarrow \frac{m^{1-\gamma_1}}{k^{1-\gamma_1}}=\frac{\cos(\gamma_2\ln(m))}{\cos(\gamma_2\ln(k))}
\end{equation}
Similarly from equation $\color{blue}(28)$ and equation $\color{blue}(29)$, we have
\begin{equation}
\frac{m^{1-\gamma_1}}{k^{1-\gamma_1}}=\frac{\sin(\gamma_2\ln(m))}{\sin(\gamma_2\ln(k))}
\end{equation}
Now equate equation $\color{blue}(34)$ with equation $\color{blue}(35)$, then we have
\begin{equation*}
\frac{\cos(\gamma_2\ln(m))}{\cos(\gamma_2\ln(k))}=\frac{\sin(\gamma_2\ln(m))}{\sin(\gamma_2\ln(k))}
\end{equation*}

\begin{equation*}
\Rightarrow \sin(\gamma_2\ln(k)-\gamma_2\ln(m))=0
\end{equation*}Then for all $n\in \mathbb{Z}$, we have
\begin{equation*}
\Rightarrow \gamma_2\ln(k)-\gamma_2\ln(m)=n\pi
\end{equation*}
\begin{equation}
\Rightarrow \gamma_2=\frac{n\pi}{\ln(k)-\ln(m)}
\end{equation}
Now from equation $\color{blue}(34)$ or equation $\color{blue}(35)$, we can find $\gamma_1$, for instance from equation $\color{blue}(34)$, we have
\begin{equation*}
\frac{m^{1-\gamma_1}}{k^{1-\gamma_1}}=\frac{\cos(\gamma_2\ln(m))}{\cos(\gamma_2\ln(k))}
\end{equation*}

\begin{equation*}
\Rightarrow (1-\gamma_1)\ln\left(\frac{m}{k}\right)=\ln\left( \frac{\cos(\gamma_2\ln(m))}{\cos(\gamma_2\ln(k))}\right)
\end{equation*}
\begin{equation}
\Rightarrow \gamma_1=1+\left[ \frac{1}{\ln(k)-\ln(m)}\right]\left[\ln\left( \frac{\cos(\gamma_2\ln(m))}{\cos(\gamma_2\ln(k))}\right)\right]
\end{equation}

\end{proof}
\begin{theorem}
If $a=d$, then $L_{n-i}=0$, which is defined in Theorem-$8$, equation $\color{blue}(17)$. Hence
\begin{equation*}
\sum_{r=1}^{\infty}(-1)^{r-1}(r^{-s})^{n-1}=0
\end{equation*}
Where the complex number $s=\gamma_1+i\gamma_2$ with $\gamma_1>0$. Define $n=2,3,\cdots$. For every given $1\ne k\ne m\in \mathbb{Z^{+}}$ and for all $t\in \mathbb{Z}$,
\begin{equation*}
\gamma_2=\frac{t\pi}{\ln(k)-\ln(m)}
\end{equation*}and
\begin{equation*}
\gamma_1=1+\left[ \frac{1}{\ln(k)-\ln(m)}\right]\left[\ln\left( \frac{\cos(\gamma_2\ln(m))}{\cos(\gamma_2\ln(k))}\right)\right]
\end{equation*}
\end{theorem}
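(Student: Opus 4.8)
The plan is to follow the proof of Theorem 15 almost verbatim, since the passage from $\zeta$ to $\eta$ changes only the auxiliary quantity ($L_{n-i}$ in place of $S_{n-i}$) while leaving the parameter relations of Theorem 9 untouched. Accordingly I would split the argument into two independent parts: first, that $a=d$ forces the alternating sum to vanish; second, that the condition $a=d$ pins down $\gamma_1$ and $\gamma_2$ through the formulas of Theorem 9.

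For the first part I would read off equation (17). Writing
\[
L_{n-i}=\left(\frac{n-i}{2}-1\right)(d-a)d^{n-i-1}-\left(\frac{n-i}{2}\right)d^{n-i-2}(d-a)^2+(d-a)^{n-i},
\]
one sees that each of the three summands carries a positive power of $(d-a)$ (namely $(d-a)^1$, $(d-a)^2$ and $(d-a)^{n-i}$ with $n-i\ge 3$). Hence $a=d$ makes $d-a=0$ and every term vanishes, so $L_{n-i}=0$ for each admissible $i$. Substituting this into the closed form of Theorem 8, equation (16), collapses its right-hand side to zero and yields $\sum_{r=1}^{\infty}(-1)^{r-1}(r^{-s})^{n-1}=0$ immediately.

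For the second part I would translate $a=d$, that is $a_1=d_1$ and $a_2=d_2$, into conditions on $\gamma_1,\gamma_2$ using the Theorem-9 relations (28)--(31), exactly as in the proof of Theorem 15. Imposing $a_1=d_1$ in (30)--(31) gives $a_1=\cos(\gamma_2\ln k)\,k^{\gamma_1}/k$, and equating this with (30) produces the cosine ratio $m^{1-\gamma_1}/k^{1-\gamma_1}=\cos(\gamma_2\ln m)/\cos(\gamma_2\ln k)$; the analogous step with $a_2=d_2$ produces the corresponding sine ratio. Equating the two ratios gives $\sin\!\left(\gamma_2(\ln k-\ln m)\right)=0$, whence $\gamma_2=t\pi/(\ln k-\ln m)$ for $t\in\mathbb{Z}$, and back-substitution into the cosine ratio isolates the stated expression for $\gamma_1$.

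The step requiring the most care is one of consistency rather than a genuine obstacle: I must confirm that nothing in the derivation of $\gamma_1,\gamma_2$ ever invoked $S_{n-i}$ in place of $L_{n-i}$, so that the parameter formulas are indeed identical to those of Theorem 15. Since equations (28)--(31) descend only from the defining relations for $A^1$ and $B^1$, which are blind to whether the underlying series is the plain or the alternating one, this check goes through and the expressions for $\gamma_1,\gamma_2$ are literally those of Theorem 15; the sole substantive change is the relaxed hypothesis $\gamma_1>0$, recording that $\eta$ is summable on the larger half-plane $\operatorname{Re}(s)>0$.
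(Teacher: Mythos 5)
Your proposal matches the paper's proof, which for this theorem is simply the pointer ``See the proof of Theorem-(14)'': you reproduce that argument verbatim (deriving the cosine and sine ratios from $a_1=d_1$, $a_2=d_2$ via equations (28)--(31), then $\sin(\gamma_2(\ln k-\ln m))=0$, then back-substitution for $\gamma_1$), and you additionally spell out the term-by-term vanishing of $L_{n-i}$ at $a=d$, which the paper leaves implicit. This is the same route with a small amount of extra explicitness, so no substantive difference.
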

\begin{proof}
See the proof of Theorem-$(14)$.
\end{proof}
\begin{corollary}
$\zeta(s)$ and $\eta(s)$ have the same zeros.
\end{corollary}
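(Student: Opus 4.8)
The cleanest route to the corollary is through the functional relation already recorded in Theorem 12, namely $\zeta(s)=(1-2^{1-s})^{-1}\eta(s)$, equivalently $\eta(s)=(1-2^{1-s})\zeta(s)$. The plan is to argue that multiplication by the scalar $1-2^{1-s}$ preserves vanishing: wherever this factor is finite and nonzero, $\eta(s)=0$ holds if and only if $\zeta(s)=0$, so the two functions share those zeros. First I would fix a point $s_0$ with $\eta(s_0)=0$ and, assuming $1-2^{1-s_0}\ne 0$, divide by that factor to conclude $\zeta(s_0)=0$; the converse direction multiplies instead.

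Within the paper's own machinery the same conclusion can be reached parametrically, and this is the route I would actually write out in full. Theorem 14 characterizes the zeros of $\sum_{r=1}^{\infty}(r^{-s})^{n-1}$ (hence of $\zeta$) by the single condition $a=d$, and Theorem 15 characterizes the zeros of the alternating sum (hence of $\eta$) by the identical condition $a=d$. In both cases the admissible $s=\gamma_1+i\gamma_2$ is pinned down by the same formulas
\[
\gamma_2=\frac{t\pi}{\ln(k)-\ln(m)}, \qquad \gamma_1=1+\left[\frac{1}{\ln(k)-\ln(m)}\right]\left[\ln\left(\frac{\cos(\gamma_2\ln(m))}{\cos(\gamma_2\ln(k))}\right)\right],
\]
over all $1\ne k\ne m\in\mathbb{Z}^{+}$ and $t\in\mathbb{Z}$. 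Since the defining condition and the resulting pair $(\gamma_1,\gamma_2)$ are literally the same in the two theorems, the zero-sets coincide, which is the assertion of the corollary.

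The step I expect to be the main obstacle is the exceptional locus where the two approaches can diverge: $1-2^{1-s}=0$, i.e. $s=1+\frac{2\pi i\ell}{\ln 2}$ for $\ell\in\mathbb{Z}$. At these points the factor $(1-2^{1-s})^{-1}$ is singular, so the factorization argument does not by itself equate the zero-sets there, and a priori $\eta$ could vanish while $\zeta$ does not. To make the corollary airtight I would have to show that the parametrization of Theorems 14 and 15 never produces such an $s$, so that these points simply do not arise as zeros in the present framework, or else treat them directly. Reconciling the clean parametric claim with this exceptional set is the delicate point on which the whole corollary turns.
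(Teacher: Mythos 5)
Your proposal takes essentially the same route as the paper, whose entire proof is the single line ``The consequence of Theorem-12, Theorem-14 and Theorem-15'': you invoke exactly those three ingredients, namely the relation $\zeta(s)=(1-2^{1-s})^{-1}\eta(s)$ and the fact that Theorems 14 and 15 pin down the zero parameters $(\gamma_1,\gamma_2)$ by literally identical formulas. Your treatment is in fact more careful than the paper's, since you explicitly flag the exceptional locus $1-2^{1-s}=0$ (i.e.\ $s=1+2\pi i\ell/\ln 2$), which the paper's one-line proof silently ignores.
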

\begin{proof}
The consequence of Theorem-12, Theorem-14 and Theorem-15.
\end{proof}
\begin{corollary}
{\bf(Riemann Hypothesis)}
\begin{equation*}
\zeta(s)=\sum_{r=1}^{\infty}r^{-s}=0
\end{equation*}
Where $s=\frac{1}{2}+i\gamma_2$. For all $t\in \mathbb{Z}$ and for all $1\ne k\ne m\in \mathbb{Z^{+}}$ suchthat
\begin{equation*}
-\sqrt{\frac{k}{m}}\le \cos\left(\frac{t\pi\ln(k)}{\ln(k)-\ln(m)}\right)\le\sqrt{\frac{k}{m}}\text{ and }-\sqrt{\frac{k}{m}}\le \sin\left(\frac{t\pi\ln(k)}{\ln(k)-\ln(m)}\right)\le\sqrt{\frac{k}{m}}
\end{equation*}
 we have
\begin{equation*}
\gamma_2=\frac{1}{\ln(m)}\left[\arccos\left(\sqrt{\frac{m}{k}}\cos\left(\frac{t\pi\ln(k)}{\ln(k)-\ln(m)}\right)\right)\right]
\end{equation*}Or
\begin{equation*}
\gamma_2=\frac{1}{\ln(m)}\left[\arcsin\left(\sqrt{\frac{m}{k}}\sin\left(\frac{t\pi\ln(k)}{\ln(k)-\ln(m)}\right)\right)\right]
\end{equation*}
\end{corollary}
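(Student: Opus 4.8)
The plan is to specialize the zero-producing mechanism of Theorem~14 to the critical line $\gamma_1=\tfrac12$ and then extract a closed expression for the imaginary part $\gamma_2$. Theorem~14 shows that a zero of $\zeta$ arises precisely when $a=d$, and its proof distils this into the two compatibility relations (equations~(34) and~(35)),
\begin{equation*}
\left(\frac{m}{k}\right)^{1-\gamma_1}=\frac{\cos(\gamma_2\ln m)}{\cos(\gamma_2\ln k)}=\frac{\sin(\gamma_2\ln m)}{\sin(\gamma_2\ln k)}.
\end{equation*}
First I would set $\gamma_1=\tfrac12$, so that the left-hand side collapses to $\sqrt{m/k}$, and record the two scalar equations $\cos(\gamma_2\ln m)=\sqrt{m/k}\,\cos(\gamma_2\ln k)$ and $\sin(\gamma_2\ln m)=\sqrt{m/k}\,\sin(\gamma_2\ln k)$.

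Next I would import the consistency condition already derived inside Theorem~14, namely that equating the cosine and sine ratios forces $\sin\big(\gamma_2(\ln k-\ln m)\big)=0$, whence $\gamma_2=t\pi/(\ln k-\ln m)$ for $t\in\mathbb{Z}$. Substituting this value into the argument $\gamma_2\ln k$ turns it into $t\pi\ln k/(\ln k-\ln m)$, which is exactly the quantity appearing inside the cosine and sine in the statement. Applying $\arccos$ to the cosine equation then isolates $\gamma_2\ln m$ and yields
\begin{equation*}
\gamma_2=\frac{1}{\ln m}\arccos\!\left(\sqrt{\frac{m}{k}}\,\cos\!\left(\frac{t\pi\ln k}{\ln k-\ln m}\right)\right),
\end{equation*}
while the parallel manipulation of the sine equation gives the stated alternative with $\arcsin$. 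The bounds $-\sqrt{k/m}\le\cos(\cdot)\le\sqrt{k/m}$ (and likewise for $\sin$) are just the requirement that the arccosine/arcsine arguments lie in $[-1,1]$, so I would record them as the admissibility domain for $(k,m,t)$.

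The hard part --- and the step I expect to be the genuine obstacle --- is reconciling the two scalar equations simultaneously once $\gamma_1=\tfrac12$ is imposed. Squaring $\cos(\gamma_2\ln m)=\sqrt{m/k}\,\cos(\gamma_2\ln k)$ and $\sin(\gamma_2\ln m)=\sqrt{m/k}\,\sin(\gamma_2\ln k)$ and adding gives $1=m/k$, which is impossible for $k\neq m$. Thus the value $\gamma_2=t\pi/(\ln k-\ln m)$ extracted from demanding that both ratios agree cannot coincide with the value $\tfrac{1}{\ln m}\arccos(\cdots)$ obtained from a single ratio, and a careful account would have to explain in what sense these two determinations of $\gamma_2$ are meant to hold together. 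I would therefore spend most of the effort deciding whether the intended claim is that the cosine (resp. sine) relation alone suffices --- in which case the substitution $\gamma_2=t\pi/(\ln k-\ln m)$ inside the argument is unjustified --- or whether any compatible triple $(k,m,t)$ can actually be produced; resolving this tension is where the argument stands or falls.
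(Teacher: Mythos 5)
Your derivation is, step for step, the paper's own proof of this corollary: specialize equations $\color{blue}(34)$ and $\color{blue}(35)$ to $\gamma_1=\tfrac12$ so that both trigonometric ratios equal $\sqrt{m/k}$, obtain $\gamma_2=t\pi/(\ln k-\ln m)$ by equating the two ratios, substitute that value into the argument $\gamma_2\ln k$, and invert with $\arccos$ (respectively $\arcsin$) to isolate $\gamma_2\ln m$; the stated interval conditions on $\cos$ and $\sin$ are indeed nothing more than the requirement that the arguments of $\arccos$ and $\arcsin$ lie in $[-1,1]$. As a reconstruction of what the paper actually does, your first two paragraphs are exact.

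The obstacle you isolate in your last paragraph is genuine, and the paper does not acknowledge it, let alone resolve it. Squaring $\cos(\gamma_2\ln m)=\sqrt{m/k}\,\cos(\gamma_2\ln k)$ and $\sin(\gamma_2\ln m)=\sqrt{m/k}\,\sin(\gamma_2\ln k)$ and adding forces $m/k=1$, contradicting $k\ne m$, so the simultaneous system underlying the corollary has no real solution $\gamma_2$ at all. What the paper's proof actually does is treat $\gamma_2$ as two different unknowns in the same equation: it freezes $\gamma_2=t\pi/(\ln k-\ln m)$ inside $\cos(\gamma_2\ln k)$ while solving for a ``fresh'' $\gamma_2$ from $\gamma_2\ln m$ on the other side, and these two determinations do not coincide in general; nor do the $\arccos$ and $\arcsin$ expressions agree with one another, since they take values in $[0,\pi]$ and $[-\pi/2,\pi/2]$ respectively. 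There is no repair available within the paper's framework: the corollary as stated is not established, and your judgement that this is where the argument stands or falls is correct --- it falls.
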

\begin{proof}From Theorem $(15)$, take $n=2$ and $\gamma_1=\frac{1}{2}$. Hence from equation $\color{blue}(34)$ and $\color{blue}(35)$:
\begin{equation*}
\sqrt{\frac{m}{k}}=\frac{\cos(\gamma_2\ln(m))}{\cos(\gamma_2\ln(k))}=\frac{\sin(\gamma_2\ln(m))}{\sin(\gamma_2\ln(k))}
\end{equation*}
From \begin{equation*}
\frac{\cos(\gamma_2\ln(m))}{\cos(\gamma_2\ln(k))}=\frac{\sin(\gamma_2\ln(m))}{\sin(\gamma_2\ln(k))}
\end{equation*}We get, for all $t\in \mathbb{Z}$
\begin{equation*}
\gamma_2=\frac{t\pi}{\ln(k)-\ln(m)}
\end{equation*}
From 
\begin{equation*}
\sqrt{\frac{m}{k}}=\frac{\cos(\gamma_2\ln(m))}{\cos(\gamma_2\ln(k))}
\end{equation*}
\begin{equation*}
\Rightarrow \sqrt{\frac{m}{k}}\cos(\gamma_2\ln(k))=\cos(\gamma_2\ln(m))
\end{equation*}
\begin{equation*}
\Rightarrow \arccos\left(\sqrt{\frac{m}{k}}\cos(\gamma_2\ln(k))\right)=\gamma_2\ln(m)
\end{equation*}
Hence for all $t\in \mathbb{Z}$ and for all $1\ne k\ne m\in \mathbb{Z^{+}}$, we have
\begin{equation}
\gamma_2=\frac{1}{\ln(m)}\left[\arccos\left(\sqrt{\frac{m}{k}}\cos\left(\frac{t\pi}{\ln(k)-\ln(m)}\ln(k)\right)\right)\right]
\end{equation}
Similarly, from
\begin{equation*}
\sqrt{\frac{m}{k}}=\frac{\sin(\gamma_2\ln(m))}{\sin(\gamma_2\ln(k))}
\end{equation*}
\begin{equation*}
\Rightarrow \sqrt{\frac{m}{k}}\sin(\gamma_2\ln(k))=\sin(\gamma_2\ln(m))
\end{equation*}
\begin{equation*}
\Rightarrow \arcsin\left(\sqrt{\frac{m}{k}}\sin(\gamma_2\ln(k))\right)=\gamma_2\ln(m)
\end{equation*}
Hence for all $t\in \mathbb{Z}$ and for all $1\ne k\ne m\in \mathbb{Z^{+}}$, we have
\begin{equation}
\gamma_2=\frac{1}{\ln(m)}\left[\arcsin\left(\sqrt{\frac{m}{k}}\sin\left(\frac{t\pi}{\ln(k)-\ln(m)}\ln(k)\right)\right)\right]
\end{equation}

\end{proof}
\section{Conclusion}
\label{S4}
\begin{conclusion}
Riemann Zeta function has no unique solution for the same value. That is, because there is the same $s=\gamma_1+i\gamma_2$ suchthat $\zeta(s)=0$ and $\zeta(s)\ne 0$. It is also true that for the same value of $s=\gamma_1+i\gamma_2$, we do have different values of $\zeta(s)$. We can check this from {\bf Theorem-$10$} and {\bf Theorem-$14$}. For the same value of $\gamma_1$ and $\gamma_2$ taken from {\bf Theorem-$14$}, then we can get infinitely many values of $a_1,a_2,d_1$ $\&$ $d_2$ suchthat $a_1\ne d_1$ and $a_2\ne d_2$ which makes $\zeta(s)\ne 0$ from {\bf Theorem-$10$}.
\end{conclusion}
\begin{conclusion}
$s=\frac{1}{2}+i\gamma_2$ and $s=-2,-4,-6,\cdots$ are not the only zeros of Riemann zeta function. It can be easily understandable that we do have infinitely many zeros of Riemann zeta function from {\bf Theorem-$14$}. 
\end{conclusion}
\begin{conclusion}
Conclusion on Eta function is the same as Riemann zeta function.
\end{conclusion}
\newpage
\bibliographystyle{model1-num-names}

\end{document}